\documentclass[12pt]{article}
\usepackage{a4wide}
\usepackage{amsmath}
\usepackage{amsthm}
\usepackage{amsfonts}
\usepackage{amssymb}
\usepackage{cite}
\usepackage{epsfig}
\newtheorem{theorem}{Theorem}
\newtheorem{lemma}[theorem]{Lemma}
\newtheorem{conjecture}{Conjecture}
\def\CC{{\cal C}}
\def\NN{{\mathbb N}}
\def\ZZ{{\mathbb Z}}
\def\RR{{\cal R}}
\def\mod{\;{\rm mod}\;}
\def\parnt{\mathbf{parnt}}
\def\succ{\mathbf{succ}}

\def\FOlocal{{\rm FO}_1^{\rm local}}
\begin{document}
\title{First order limits of sparse graphs:\\ Plane trees and path-width}
\author{Jakub Gajarsk\'y\thanks{Faculty of Informatics, Masaryk University, Botanick\'a~68a, 602~00~Brno, Czech Republic. E-mail: {\tt \{xgajar,hlineny,obdrzalek,ordyniak\}@fi.muni.cz}. JG, PH and JO have been supported by project 14-03501S of the Czech Science Foundation. SO has been supported by the European Social Fund and the state budget of the Czech Republic under project CZ.1.07/2.3.00/30.0009 (POSTDOC I).}\and
\newcounter{lth}
\setcounter{lth}{1}
        Petr Hlin\v en\'y$^\fnsymbol{lth}$\and
        Tom\'a\v s Kaiser\thanks{Department of Mathematics, Institute for Theoretical Computer Science (CE-ITI), and European Centre of Excellence NTIS (New Technologies for the Information Society), University of West Bohemia, Univerzitn\'{\i} 8, 306~14 Pilsen, Czech Republic. Email: {\tt kaisert@kma.zcu.cz}. Supported by project GA14-19503S of the Czech Science Foundation.}\and
        Daniel Kr\'al'\thanks{Mathematics Institute, DIMAP and Department of Computer Science, University of Warwick, Coventry CV4 7AL, UK. E-mail: {\tt d.kral@warwick.ac.uk}. This author's work was supported by the European Research Council under the European Union's Seventh Framework Programme (FP7/2007-2013)/ERC grant agreement no.~259385 and by the Engineering and Physical Sciences Research Council Standard Grant number EP/M025365/1.}\and
        Martin Kupec\thanks{Computer Science Institute, Faculty of Mathematics and Physics, Charles University, Malo\-stransk\'e n\'am\v est\'i~25, 118~00, Prague, Czech Republic. E-mail: {\tt magon@iuuk.mff.cuni.cz}.}\and
        Jan Obdr\v{z}\'alek$^\fnsymbol{lth}$\and
        Sebastian Ordyniak$^\fnsymbol{lth}$\and
	Vojt\v{e}ch T\r{u}ma\thanks{Department of Applied Mathematics, Faculty of Mathematics and Physics, Charles University, Malostransk\'e n\'am\v est\'i~25, 118~00, Prague, Czech Republic. E-mail: {\tt voyta@kam.mff.cuni.cz}.}
	}
\date{}
\maketitle
\begin{abstract}
Ne\v set\v ril and Ossona de Mendez introduced the notion of first order convergence as
an attempt to unify the notions of convergence for sparse and dense graphs.
It is known that there exist first order convergent sequences of graphs with no limit modeling (an analytic representation of the limit).
On the positive side, every first order convergent sequence of trees or
graphs with no long path (graphs with bounded tree-depth) has a limit modeling.
We strengthen these results by showing that
every first order convergent sequence of plane trees (trees with embeddings in the plane) and
every first order convergent sequence of graphs with bounded path-width has a limit modeling.
\end{abstract}

\section{Introduction}
\label{sect-intro}

The theory of combinatorial limits has quickly become an important area of combinatorics.
The most developed is the theory of graph limits,
which is a subject of a recent monograph by Lov\'asz~\cite{bib-lovasz-book}.
The graph convergence evolved to a large extent differently and independently for dense and sparse graphs.
The case of dense graphs was developed in the series of papers
by Borgs, Chayes, Lov\'asz, S\'os, Szegedy and Vesztergombi~\cite{bib-borgs08+,bib-borgs+,bib-borgs06+,bib-lovasz06+,bib-lovasz10+}
and is considered to be well-understood.
In the case of sparse graphs (such as those with bounded maximum degree),
the most used notion of convergence known as the Benjamini-Schramm convergence,
which was studied e.g.~in~\cite{bib-aldous07+,bib-benjamini01+,bib-elek07},
comes with substantial disadvantages.
Several alternative notions were proposed~\cite{bib-bollobas11+,bib-borgs13++,bib-hatami+}, however,
each of them also comes with certain drawbacks.

As an attempt to unify the existing notions of convergence for dense and sparse graphs,
Ne\v set\v ril and Ossona de Mendez~\cite{bib-pom,bib-pom-unified} proposed
a notion of convergence based on first order properties of graphs,
the {\em first order convergence} (a formal definition is given in Section~\ref{sect-fo}).
This notion applies to all relational structures, and
it implies the standard notion of convergence in the case of dense graphs and
the Benjamini-Schramm convergence in the case of sparse graphs.
A first order convergent sequence of graphs can be associated with an analytic representation,
known as a {\em limit modeling}.
Unfortunately, not all first order convergent sequences of graphs do have a limit modeling~\cite{bib-pom-unified},
e.g., the sequence of Erd\H os-R\'enyi random graphs is first order convergent with probability one
but it has no limit modeling.

The existence of a limit modeling of a first order convergent sequence
is one of central problems related to first order convergence.
Ne\v set\v ril and Ossona de Mendez~\cite{bib-pom-unified} conjectured that
every first order convergent sequence of sparse graphs has a limit modeling:
\begin{conjecture}
\label{conj}
Let $\CC$ be a nowhere-dense class of graphs.
Every first order convergent sequence of graphs from $\CC$ has a limit modeling.
\end{conjecture}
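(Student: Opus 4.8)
\medskip
\noindent\textbf{Proof proposal.}
The plan is not to attack Conjecture~\ref{conj} head on but to climb a hierarchy of structurally richer yet still restricted subclasses, constructing a limit modeling for each and then combining them. The two natural base cases are the ones already settled by Ne\v set\v ril and Ossona de Mendez: both every first order convergent sequence of \emph{trees} and every first order convergent sequence of graphs of \emph{bounded tree-depth} admit a limit modeling. I would use these as black boxes and bootstrap. First, rooted \emph{plane trees}, where the plane embedding is recorded by a parent function $\parnt$ together with a next-sibling successor function $\succ$: a convergent sequence of plane trees should reduce to a convergent ``top part'' of bounded height together with, for each slot below it, a convergent family of subtrees, recursing on the subtrees. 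Second, graphs of \emph{bounded path-width}, which after a bounded amount of colouring sit ``one step beyond'' bounded tree-depth: cut along the path decomposition into consecutive blocks and, passing to a subsequence, obtain a convergent sequence of blocks to which the bounded-tree-depth result applies.

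Concretely: (1)~fix a first order convergent sequence $(G_n)$ from the class and record the limit probability of every first order sentence; the goal is a standard Borel vertex space $V$ carrying a Borel probability measure and a symmetric Borel edge relation whose Stone pairing realises these limits. (2)~Decompose each $G_n$ along the relevant backbone---a path for bounded path-width, a bounded-height rooted subtree for plane trees---so that what hangs off the backbone falls into boundedly many already-treated types, and pass to a subsequence to make each piece converge. (3)~Invoke the tree and bounded-tree-depth theorems to build a limit modeling for every piece, and then \emph{glue}: form a weighted, possibly countably infinite, disjoint-union-type combination of these modelings over the backbone, equip it with the product Borel structure, and define the global edge relation from the pieces' edge relations together with the backbone adjacencies. (4)~Verify that the glued structure is a modeling of the original limit: by Gaifman locality every first order sentence about the whole graph is determined by bounded-radius neighbourhoods of boundedly many vertices, so its limit probability is a Borel function of the pieces' data, and here the one-free-variable local formalism $\FOlocal$ is exactly what transfers local statistics from the $G_n$ to $V$.

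The real obstacle is step~(4) once one leaves the bounded-width world. For plane trees and for bounded path-width the backbone is so simple---a path, or a tree of bounded height---that locality confines the witnesses of any fixed sentence to boundedly many pieces, and the gluing is essentially a measurable infinite product; this is what makes those two cases actually provable here. For a general nowhere-dense class there is no such finite backbone: the natural replacement, low tree-depth colourings, uses a number of colours growing with the target depth, the pieces then interact through unboundedly many colour classes at once, and the decomposition is not first order definable, so the measurable dependence of sentence-probabilities on the pieces breaks down. Closing this gap would seem to require a genuinely new tool---for instance a limit-compatible version of the splitter game or of uniform quasi-wideness---and the Erd\H os--R\'enyi example is a reminder that the statement is tight: any proof must use sparsity in an essential way that does not survive small perturbations.
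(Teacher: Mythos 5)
The statement you were asked to prove is labelled a \emph{conjecture} in the paper (due to Ne\v set\v ril and Ossona de Mendez), and the paper does not prove it; it only establishes the two special cases of plane trees and bounded path-width. Your proposal is honest about this: you explicitly say the general nowhere-dense case is out of reach and explain why. So there is no gap in the sense of a false claim --- you and the paper agree that Conjecture~\ref{conj} remains open, and your diagnosis of the obstruction (no finite first-order-definable backbone, low tree-depth colourings requiring unboundedly many colour classes that interact non-locally) is a fair account of why the problem is hard.

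Where your sketch genuinely diverges from what the paper does is in the two special cases themselves. For plane trees you propose a comb-style recursion (bounded-height top part plus convergent subtree families hung off its slots), but the paper explicitly declines that route and instead isolates $\varepsilon$-major nodes, pins them down with countably many constants to form a ``null-partitioned'' sequence, proves a Hanf-type locality theorem for plane $c$-trees via Ehrenfeucht--Fra\"iss\'e games, and then builds the modeling from a measure on the space of Hintikka chains using irrational-rotation and bit-interleaving measure-preserving maps. For bounded path-width you propose cutting the path decomposition into blocks and invoking the bounded tree-depth theorem on each; the paper instead constructs an explicit mapping $t_A$ that encodes an entire $A$-interval graph as a single coloured plane tree of bounded depth, and then pulls the plane-tree modeling back through a first-order interpretation scheme. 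The interpretation-scheme route is what lets the paper transfer the strong finitary mass transport principle cleanly; your gluing step would need a separate argument for that. Neither divergence is an error --- your sketch could plausibly be made to work --- but it is not the argument the paper gives, and the paper's own proof of the plane-tree case is deliberately \emph{not} a recursion on subtrees, precisely because they wanted a more combinatorial, less model-theoretic composition step.

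One small caution on your step~(4): Gaifman locality alone is not enough here because plane trees have unbounded degree, so bounded-radius balls can be large. The paper's Lemma~\ref{lm-hanf} is a tailored substitute that controls this via canonical paths and the major-node constants; a general Gaifman argument would need to be replaced by something of that shape.
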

\noindent Recall that nowhere-dense classes of graphs are classes of graphs~\cite{bib-nom2012}
which include all minor closed classes of graphs (in particular, trees, planar graphs, etc.) and
some more general classes of sparse graphs.
However, only little is known towards proving Conjecture~\ref{conj}.
Ne\v set\v ril and Ossona de Mendez~\cite{bib-pom-unified} showed that
every first order convergent sequence of trees of bounded depth has a limit modeling, and
they used this result to show that
every first order convergent sequence of graphs with bounded tree-depth has a limit modeling.
Three of the authors (DK, MK and VT) extended this result and showed that
every first order convergent sequence of trees has a limit modeling.
This is also implied by a more general result of Ne\v set\v ril and Ossona de Mendez~\cite{bib-pom-trees},
who developed a framework for building limit modelings based
on residual and non-dispersive first order convergent sequences.

In this paper, we make another step towards a proof of Conjecture~\ref{conj}.
We show that every first order convergent sequence of trees embedded in the plane has a limit modeling (Theorem~\ref{thm-plane}) and
that every first order convergent sequence of graphs with bounded path-width has a limit modeling (Theorem~\ref{thm-pw}).
While the first result can be viewed as a small extension of the result on the existence of limit modelings of first order convergent sequences of trees,
it turned out that embedding the trees in the plane, which essentially corresponds to fixing the cyclic order among the neighbors of each vertex,
gave us enough power to prove the (more important) result
on the existence of limit modelings of first order convergent sequences of graphs with bounded path-width.
Note that the class of graphs of bounded path-width is significantly richer than
the class of trees, which do not have cycles at all, or the classes of graphs with bounded tree-depth, which do not have long paths.
In a certain sense, this is the first class of graphs with rich internal structure for which Conjecture~\ref{conj} is proven.

The proof of Theorem~\ref{thm-plane} on the existence of limit modelings of first order convergent sequences of plane trees
consists of two steps: a decomposition step described in Section~\ref{sect-decompose} and
a composition step described in Section~\ref{sect-compose}.
The decomposition step aims at analyzing first order properties of the graphs in the sequence and
describing them through quantities that we refer to as Stone measure and discrete Stone measure.
The composition step then uses these quantities to build a limit modeling of the sequence.
The decomposition step follows the line of our original proof of the existence of limit modelings of first order convergent sequences of trees.
However, we decided to replace the composition step of our original proof
with the arguments from the analogous part of the proof given in~\cite{bib-pom-trees},
which we have found elegant and simpler than the composition step of our original proof using methods from~\cite{bib-hatami+}.
We then employ first order interpretation schemes to encode graphs with bounded path-width by plane trees,
which allows us to prove our result on first order convergent sequences of graphs with bounded path-width (Theorem~\ref{thm-pw}).
We also note that
the modelings constructed in Theorems~\ref{thm-plane} and~\ref{thm-pw} satisfy the strong finitary mass transport principle,
which, vaguely speaking, forbids the existence of a small and a large subset of vertices with a matching between them.

\section{Notation}
\label{sect-notation}

We mostly follow the standard graph theory terminology and the standard model theory terminology as
it can be found e.g.~in~\cite{bib-diestel} and in~\cite{bib-ebbinghaus+}, respectively.
Still, we want to specify some less standard notation and
to introduce some non-standard notation related to graphs with bounded path-width.
In what follows, all graphs, trees, etc. are finite unless specified otherwise, and
the {\em order} of a graph is the number of its vertices.
The set of positive integers is denoted by $\NN$ and
the set of non-negative integers by $\NN_0$. 
The set of integers from $1$ to $k$ (inclusively) is denoted by $[k]$, and $\NN^*$ stands for $\NN\cup\{\infty\}$.
If $x$ is a real number and $z$ is a positive real, $x\mod z$ denotes the unique real $x'\in [0,z)$ such that
$x=x'+kz$ for some $k\in\ZZ$.

\subsection{Path-width and semi-interval graphs}
\label{subsect-pw}

If $G$ is a graph, then a semi-interval representation of $G$
is an assignment of intervals $J_v$ of the form $[k,\ell)$, $k,\ell\in\ZZ$, $k<\ell$, to vertices $v$ of $G$ such that
the intervals $J_v$ and $J_{v'}$ of any two adjacent vertices $v$ and $v'$ intersect (however,
the intervals of non-adjacent vertices may also intersect).
A graph with a fixed semi-interval representation is called a {\em semi-interval graph}.
The intervals of the form $[k,k+1)$, $k\in\ZZ$ are called segments.
If $G$ is a semi-interval graph, then the {\em first segment} is the leftmost segment intersected by an interval assigned to a vertex of $G$ and
the {\em last segment} is the rightmost such segment.

The {\em path-width} of a graph $G$ is the smallest integer $k$ such that
$G$ has a semi-interval representation such that each segment is contained in at most $k+1$ intervals assigned to vertices of $G$.
Note that this definition coincides with the usual definition of the path-width.
In particular, given a semi-interval graph $G$ such that each segment is contained in at most $k+1$ intervals,
one can construct its path-decomposition of width (at most) $k$
by taking a path with vertices corresponding to the segments between the first and the last segment and
assigning each vertex $u$ of the path a bag consisting of the vertices of $G$ whose intervals contain the segment corresponding to $u$.
Likewise, a path-decomposition $G$ of width $p$ naturally yields a semi-interval graph such that
each segment is contained in at most $p+1$ intervals.

\subsection{First order convergence}
\label{sect-fo}

The notion of first order convergence applies to all relational structures (and even further, e.g., to matroids~\cite{bib-kardos+}).
However, we limit our exposition to graphs for simplicity.
The extensions to rooted graphs, vertex-colored graphs, etc., are straightforward.
If $\psi$ is a first order formula with $k$ free variables and $G$ is a (finite) graph,
then the {\em Stone pairing} $\langle \psi,G\rangle$ is the probability that
a uniformly chosen $k$-tuple of vertices of $G$ satisfies $\psi$.
A sequence $(G_n)_{n\in\NN}$ of graphs is {\em first order convergent}
if the limit $\lim\limits_{n\to\infty}\langle \psi,G_n\rangle$ exists for every first order formula $\psi$.
We note that every sequence of graphs has a first order convergent subsequence.

A {\em modeling} $M$ is a (finite or infinite) graph 
with a standard Borel space on its vertex set equipped with a probability measure
such that the set of all $k$-tuples of vertices of $M$ satisfying a formula $\psi$ is measurable
in the product measure for every first order formula $\psi$ with $k$ free variables.
In the analogy to the graph case, the {\em Stone pairing} $\langle \psi,M\rangle$
is the probability that a randomly chosen $k$-tuple of vertices satisfies $\psi$.
If a finite graph is viewed as a modeling with a uniform discrete probability measure on its vertex set,
then the Stone pairings for the graph and the modeling obtained in this way coincide.
A modeling $M$ is a {\em limit modeling} of a first order convergent sequence $(G_n)_{n\in\NN}$
if $$\lim_{n\to\infty}\langle \psi,G_n\rangle=\langle\psi,M\rangle$$
for every first order formula $\psi$.
The definitions of a modeling and a limit modeling readily generalize from the case of graphs
to the case of general relational structures, which include directed graphs, graphs with colored edges, etc. as particular cases.

Every limit modeling $M$ of a first order convergence sequence of graphs satisfies
the so-called {\em finitary mass transport principle} (see~\cite{bib-pom-trees} for further details) that requires that
for any two first order formulas $\psi$ and $\psi'$, each with one free variable, such that
every vertex $v$ satisfying $\psi(v)$ has at least $a$ neighbors satisfying $\psi'$ and
every vertex $v$ satisfying $\psi'(v)$ has at most $b$ neighbors satisfying $\psi$,
it holds that
$$a\langle\psi,M\rangle\le b\langle\psi',M\rangle\;\mbox{.}$$
We are interested in a stronger variant of this principle.
We say that a modeling $M$ satisfies the {\em strong finitary mass transport principle}
if every two measurable subsets $A$ and $B$ of the vertices of $M$ such that
each vertex of $A$ has at least $a$ neighbors in $B$ and
each vertex of $B$ has at most $b$ neighbors in $A$
satisfy that
$$a\mu(A)\le b\mu(B)$$
where $\mu$ is the probability measure of $M$.
Note that the assertion of the finitary mass transport principle requires this inequality to hold only for first order definable subsets of vertices.
The strong finitary mass transport principle is satisfied by any finite graph when viewed as a modeling
but it need not hold for every limit modeling.
The importance of the strong finitary mass transport principle comes from its relation to graphings,
which are limit representations of Benjamini-Schramm convergent sequences of bounded degree graphs:
a limit modeling of a first order convergent sequence of bounded degree graphs is a limit graphing of the sequence if and only if
$M$ satisfies the strong finitary mass transport principle.

\subsection{Hintikka chains}
\label{sect-Hintikka}

Hintikka sentences are maximally expressive sentences with a certain quantifier depth.
We alter the definition to local formulas with a single free variable.
Consider a signature that includes the signature of graphs and is finite except that
it may contain countably many constants, which are labelled by natural numbers.
A first order formula $\psi$ is {\em local}
if each quantifier is restricted to the neighbors of one of the vertices,
i.e., it is of the form $\forall_z x$ or $\exists_z x$ where $x$ is required to be a neighbor of $z$.
For example, the formula $\psi(z)\equiv\exists_z x\;\forall_x y\; y=z$ is true
iff the vertex $z$ has a neighbor of degree exactly one.
The quantifier depth of a local formula is defined in the usual way.
A formula is a {\em $d$-formula} if its quantifier depth is at most $d$ and
it does not involve any constants except for the first $d$ constants;
a $d$-formula with no free variables is referred to as a {\em $d$-sentence}.

The same argument as in the textbook case of first order sentences yields that
there are only finitely many non-equivalent local $d$-formulas with one free variable for every $d$.
Let $\FOlocal$ be a maximal set of non-equivalent local formulas with one free variable,
i.e., a set containing one representative from each equivalence class of local formulas with one free variable.
If $d$ is an integer, the {\em $d$-Hintikka type} of a vertex $v$ of a (not necessarily finite) graph $G$
is the set of all $d$-formulas $\psi\in\FOlocal$ such that $G\models\psi(v)$.
A formula $\psi\in\FOlocal$ with quantifier depth at most $d$ is called a {\em $d$-Hintikka formula}
if there exist a (not necessarily finite) graph $G$ and a vertex $v$ of $G$ such that
$\psi$ is equivalent to the conjunction of the {\em $d$-Hintikka type} of the vertex $v$ of the graph $G$
(note that $\psi$ must actually be equivalent to one of the formulas in the $d$-Hintikka type of $v$).

Fix a $d$-Hintikka formula $\psi$.
Observe that if $v$ is a vertex of a graph $G$ and $v'$ is a vertex of a graph $G'$ such that $G\models\psi(v)$ and $G'\models\psi(v')$
then the $d$-Hintikka types of $v$ and $v'$ are the same. So, we can speak of the $d$-Hintikka type of $\psi$.
A {\em Hintikka chain} is a sequence $(\psi_i)_{i\in\NN}$ such that $\psi_i$ is an $i$-Hintikka formula and
the $i$-Hintikka type of $\psi_i$ contains $\psi_1,\ldots,\psi_{i-1}$.
If $\psi$ is a $d$-Hintikka formula, the set $\{(\psi_i)_{i\in\NN}:\psi_d=\psi\}$ of Hintikka chains is called {\em basic}.
The set of Hintikka chains (formed by Hintikka formulas with a fixed signature) can be equipped
with the topology with the base formed by basic sets; basic sets are clopen in this topology.
This defines a Polish space on the set of Hintikka chains.
In the proof of Lemma~\ref{lm-composition},
we will define a measure on the $\sigma$-algebra of Borel sets of Hintikka chains.
Let us remark that the just defined topological space of Hintikka chains
is homeomorphic to the Stone space of $\FOlocal$ studied in~\cite{bib-pom,bib-pom-unified}.

A graph theory inspired view of Hintikka chains can be the following.
Consider a rooted infinite tree where the root node corresponds to the tautology and
the nodes at depth $d$ one-to-one correspond to $d$-Hintikka formulas.
The parent of the node corresponding to a $d$-Hintikka formula $\psi$ is the unique node $u$ at depth $d-1$ such that
the formula corresponding to $u$ is contained in the $d$-Hintikka type of $\psi$.
Note that the constructed tree is locally finite.
The Hintikka chains one-to-one correspond to infinite paths from the root and
the just defined topology is the most often considered topology on such paths in the infinite rooted trees.

\section{Limits of plane trees}
\label{sect-plane}

A {\em plane tree} is a rooted tree embedded in the plane.
Having in mind our application to graphs with bounded path-width,
we will refer to vertices of plane trees as {\em nodes} (to distinguish
them from the vertices of graphs with bounded path-width, which we consider later).
The signature we use to describe plane trees consists of two binary relational symbols $\parnt$ and $\succ$:
the relation $\parnt$ describes the child-parent relation and
the relation $\succ$ determines a linear order among the children of nodes.
More precisely, if $T$ is a plane tree,
then $\parnt(x,y)$ if $x$ is a child of $y$.
The embedding of $T$ in the plane determines a linear order among the children of each node $u$ and
$\succ(x,y)$ if $x$ and $y$ are children of the same node that are consecutive in this linear order.
Note that $\succ$ determines a linear order on the children of each node
but this order is not first order definable using $\succ$.
Finally, since plane trees are rooted, we can speak about subtrees of their nodes,
i.e.~the {\em subtree} of a node $u$ of a plane tree is the subtree induced by all descendants of $u$.

The main result of this section is the following theorem.

\begin{theorem}
\label{thm-plane}
Every first-order convergent sequence of plane trees has a limit modeling
satisfying the strong finitary mass transport principle.
\end{theorem}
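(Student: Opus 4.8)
The plan is to follow the two-step strategy announced in the introduction: a decomposition step that extracts from the convergent sequence a limiting analytic datum, and a composition step that builds a modeling realizing it. First I would set up the decomposition. Given a first-order convergent sequence $(T_n)_{n\in\NN}$ of plane trees, the idea is to root each $T_n$ and, for each node, record the $d$-Hintikka type determined by the subtree hanging below it, together with the cyclic/linear order information supplied by $\succ$. Passing to the limit of the Stone pairings $\langle\psi,T_n\rangle$ over all local $d$-formulas $\psi$ and all $d$, one obtains a probability measure on the Polish space of Hintikka chains (the \emph{Stone measure}), and, because plane trees are genuinely tree-like rather than degenerate, one also gets a \emph{discrete Stone measure} recording the atoms that persist (nodes whose subtree is ``large'' in the limit, i.e. the ones that will survive as positive-mass points or as branching points of the limit object). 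The key technical content here is a compactness/tightness argument showing these measures are well defined and capture \emph{all} the information in the limit of the Stone pairings; this mirrors the earlier treatment of trees and of bounded-depth trees, with the plane embedding adding a linear order on children that must be transported consistently to the limit.

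Second, the composition step: from the Stone measure and discrete Stone measure I would assemble a modeling $M$. The vertex set is built as a disjoint union of (i) an at-most-countable discrete part carrying the atoms from the discrete Stone measure, arranged into a tree according to the $\parnt$ relation forced by the Hintikka chains, and (ii) continuous parts — standard Borel spaces with the appropriate fraction of the total mass — attached along the skeleton to account for the non-atomic portion of the Stone measure, each continuous block again equipped with the $\parnt$ and $\succ$ relations dictated by its Hintikka chain. One then defines the probability measure by distributing mass according to the two Stone measures. The crucial verification is that $\langle\psi,M\rangle=\lim_n\langle\psi,T_n\rangle$ for every first-order $\psi$. By a locality/Gaifman-type argument it suffices to check this for local formulas (the $\FOlocal$ fragment), and for those the value of $\langle\psi,M\rangle$ is determined exactly by the Stone measure evaluated on the clopen basic set corresponding to the Hintikka type implied by $\psi$ — which by construction equals the limit. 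For this step I intend to import the argument from~\cite{bib-pom-trees} essentially verbatim, as indicated in the introduction, since it is cleaner than the original composition via the methods of~\cite{bib-hatami+}.

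Finally, I would check the strong finitary mass transport principle for the constructed $M$. Here the point is structural: because $M$ is built as a tree-like object along the $\parnt$ relation, any measurable $A$ and $B$ with each vertex of $A$ having $\ge a$ neighbors in $B$ and each vertex of $B$ having $\le b$ neighbors in $A$ gives a ``matching-like'' constraint that can be integrated along the decomposition. One shows $a\mu(A)\le b\mu(B)$ by decomposing $A$ and $B$ according to the skeleton levels and using that within the finite-tree approximations the inequality holds exactly, then passing to the limit; the tree structure prevents the pathological large-vs-small matchings that break the principle for general limit modelings.

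The main obstacle I expect is the decomposition step — specifically, proving that the Stone measure together with the discrete Stone measure really determines $\lim_n\langle\psi,T_n\rangle$ for \emph{all} first-order $\psi$ and not merely local ones, and handling the interaction between the $\succ$-order on children and the limiting process (a finite plane tree has a genuine linear order on each node's children, but in the limit a node may acquire a continuum of children, so one must decide on and justify the right order type — this is precisely where the plane-tree case is more delicate than the abstract-tree case, and it is the feature the paper later exploits for bounded path-width). The composition step and the mass-transport verification I expect to be more routine, modulo careful bookkeeping.
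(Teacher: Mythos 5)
Your proposal correctly anticipates the two-phase architecture (extract a Stone measure and discrete Stone measure on Hintikka chains, then build a modeling from them) and, to your credit, correctly identifies the central difficulty: showing that these measures determine $\lim_n\langle\psi,T_n\rangle$ for \emph{all} first-order $\psi$, not just local ones, and in particular handling pairs of random nodes that land close together. But the proposal leaves exactly this gap open, and the paper's mechanism for closing it is missing from your sketch. The paper introduces \emph{$\varepsilon$-major nodes} (nodes whose removal leaves the two largest components summing to at most $(1-\varepsilon)|T|$), proves there are at most $\varepsilon^{-2}$ of them (Lemma~\ref{lm-major-bound}) and that away from them every $r$-neighborhood has at most $(2^{r+1}+1)\varepsilon|T|$ nodes (Lemma~\ref{lm-major-neighborhood}), and then passes to a subsequence of plane $c$-trees in which all major nodes are marked by constants, a \emph{null-partitioned} sequence (Lemma~\ref{lm-c-trees}). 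This is precisely what makes the reduction in Lemma~\ref{lm-modeling} go through: after removing finitely many constants, two random nodes are at distance $>2\cdot 10^d$ except with probability $O(\varepsilon)$, so a Hanf-type theorem lets one compute $\langle\psi,M\rangle$ from single-node Hintikka-type statistics, which is exactly the data carried by $\nu$ and $\mu$. Without an analogue of the major-node/constants device, the appeal to ``a locality/Gaifman-type argument'' does not close the gap you flagged: Gaifman locality still leaves close pairs with nonnegligible probability, and their relative positions (including $\succ$-order) are first-order expressible.

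A secondary missing ingredient is that the Hanf-type step is itself nontrivial here because plane trees have unbounded degree, so the classical bounded-degree Hanf theorem does not apply. The paper proves a bespoke Ehrenfeucht--Fra\"iss\'e lemma (Lemma~\ref{lm-hanf}, yielding Theorem~\ref{thm-hanf}) whose proof is a careful induction over canonical paths (sequences of up/down/left/right moves) with quantitative thresholds $\beta_{d,\ell}$ and $\gamma_{d,\ell}$; your proposal treats this as folklore. Finally, the composition and mass-transport steps in your sketch are in the right spirit but too imprecise to check: the paper's modeling has vertex set $V_f\cup V_\infty$ with $V_\infty=\nu^{-1}(\infty)\times[0,1)^3$ and defines $\parnt$ and $\succ$ via the irrational rotation $x\mapsto x+\sqrt 2\bmod 1$ and a bit-interleaving bijection $\zeta:[0,1)\to[0,1)^2$, and it is precisely the invertibility and measure-preservation of these maps that yields the strong finitary mass transport principle — not a limiting argument ``along skeleton levels,'' which would in general only give the weak (first-order-definable) version of the principle.
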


As we have already mentioned, the proof of Theorem~\ref{thm-plane} has two parts.
First, we represent a first order convergent sequence of plane trees through a measure on Hintikka chains.
This will require identifying nodes in the sequence that are in the proximity of a non-zero fraction of all the nodes.
The second step of the proof consists of constructing a tree on a measurable set of nodes that
satisfies the same first order formulas and has the same Stone pairings as the considered sequence of plane trees.
On a high level, one can think of our decomposition step as analogous to comb structure results
given for trees of bounded depth in~\cite[Theorems 27--29]{bib-pom-unified} and
for general trees in~\cite[Theorem 36]{bib-pom-trees}.
However, we do not use comb structure to capture first order properties of plane trees and
we employ another technique, bringing a different view on constructing limits of first order convergent sequences.
We believe that our decomposition technique involves fewer tools from analysis and model theory and is more of combinatorial nature,
which might be useful when extending the results to wider classes of graphs.
On the other hand,
the composition step developed by some of the authors when constructing modelings of first order convergent sequences of trees
was more technical and less elegant than the one used in~\cite{bib-pom-trees}.
So, we adapt ideas from the proof of Lemma 39 in~\cite{bib-pom-trees} to prove Lemma~\ref{lm-composition};
however, it is not possible to use Lemma 39 from~\cite{bib-pom-trees} in our setting directly
since it concerns trees/forests only and
it is designed to be used in conjunction with comb structure results presented in~\cite{bib-pom-trees}.

\subsection{Decomposition}
\label{sect-decompose}

Our first aim is to identify nodes that are in the proximity of a non-zero fraction of other nodes.
A node $u$ of a tree $T$ is {\em $\varepsilon$-major}
if the sum of the sizes of the two largest components of $T\setminus u$ is at most $(1-\varepsilon)|T|$.
Every node in the proximity of a non-zero fraction of other nodes
is also close to a major node, as given in the next lemma.
However, the converse need not be true,
i.e.~the $r$-neighborhoods of major nodes can be small.

\begin{lemma}
\label{lm-major-neighborhood}
Let $T$ be a tree, $\varepsilon$ a positive real and $U$ the set of $\varepsilon$-major nodes of $T$.
For every node $v$ of $T\setminus U$ and every integer $r$,
the number of nodes at distance at most $r$ from $v$ in $T\setminus U$ is at most $(2^{r+1}+1)\varepsilon|T|$.
\end{lemma}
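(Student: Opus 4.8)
The plan is to exploit the following consequence of a node $a$ being \emph{not} $\varepsilon$-major: apart from the two largest components of $T-a$ (which may together carry almost all of $T$), every remaining component of $T-a$ is ``small'', and all of them together span fewer than $\varepsilon|T|-1$ nodes. Hence, while exploring the $r$-neighbourhood of $v$ inside $T\setminus U$, at each node only two ``directions'' can lead into a large portion of the tree, and the remaining directions contribute a total of fewer than $\varepsilon|T|$ nodes; iterating $r$ times should produce at most a factor $2^{r}$ from the branching, which is exactly the growth rate appearing in the bound.

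To make this precise I would first fix notation: for an edge $ab$ of $T$ let $T_{a\to b}$ denote the vertex set of the component of $T-a$ containing $b$. In a tree, distinct neighbours of $a$ lie in distinct components of $T-a$, and $|T_{a\to b}|+|T_{b\to a}|=|T|$. For $a\notin U$, not being $\varepsilon$-major means that the two largest components of $T-a$ have total size exceeding $(1-\varepsilon)|T|$; call the (at most two) neighbours of $a$ in those components the \emph{big} neighbours of $a$ and the rest the \emph{small} ones, so that $\sum_{c\text{ small}}|T_{a\to c}|<\varepsilon|T|-1$. Then, for $a\notin U$ and $c\in N(a)$, let $P_r(a,c)$ be the number of nodes $x\in T_{a\to c}$ whose unique path to $a$ in $T$ has length at most $r$ and avoids $U$. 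Classifying such an $x$ by the neighbour of $c$ through which it is reached gives $P_r(a,c)=0$ if $c\in U$ or $r=0$, and $P_r(a,c)=1+\sum_{c'\in N(c)\setminus\{a\}}P_{r-1}(c,c')$ otherwise. Writing $Q_r$ for the supremum of $P_r(a,c)$ over all such pairs, the big/small split (using $P_{r-1}(c,c')\le|T_{c\to c'}|$ for small $c'$, and that $c$ has at most two big neighbours) yields $Q_r<\varepsilon|T|+2Q_{r-1}$; since $Q_0=0$, an immediate induction gives $Q_r\le(2^r-1)\varepsilon|T|$. Finally, the quantity we must bound equals $1+\sum_{a\in N(v)}P_r(v,a)$, and applying the same split at $v$ bounds it by $1+(\varepsilon|T|-1)+2Q_r<(2^{r+1}-1)\varepsilon|T|$, comfortably below the claimed $(2^{r+1}+1)\varepsilon|T|$.

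The main obstacle, and the one point requiring thought, is isolating the right recursive quantity: which two directions out of a node are ``big'' depends on that node and not on $v$, so one must re-anchor the ``only two large directions'' argument at every step, which is precisely what tracking $P_r(a,c)$ (counting nodes strictly beyond $c$ along $U$-avoiding paths) accomplishes. Once this is set up, the argument is a short induction plus bookkeeping with a geometric series. A little care is also needed for degenerate cases — leaves of $T$, or the case $V(T)=U$, where the statement is vacuous — but these cause no real difficulty.
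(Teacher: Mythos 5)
Your proposal is correct and is essentially the same argument the paper gives, just phrased as a solved recurrence $Q_r \le 2Q_{r-1} + \varepsilon|T|$ rather than as the paper's greedy ``green'' colouring of a branching-factor-two backbone followed by classifying reachable nodes by the last green node on their path from $v$; in both cases the key observation is that a non-$\varepsilon$-major node has at most two ``big'' directions and all remaining directions together carry fewer than $\varepsilon|T|-1$ nodes. Your bookkeeping in fact delivers the slightly sharper bound $(2^{r+1}-1)\varepsilon|T|$, whereas the paper's looser count of green nodes gives $(2^{r+1}+1)\varepsilon|T|$.
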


\begin{proof}
Fix the node $v$ of $T\setminus U$. We will color some nodes in the $r$-neighborhood of $v$ as green.
We start with coloring the node $v$ green and proceed as follows as long as we can.
If $w$ is a green node at distance at most $r-1$ from $v$,
we color the neighbors of $w$ in the two largest components of $T\setminus w$ green.
In this way, at most $2^{r+1}+1$ nodes are colored green (each node at distance $\ell$ from $v$
is adjacent to at most two green nodes at distance $\ell+1$).
Next, we recolor all green nodes that are $\varepsilon$-major to red.
We will refer to the nodes that are not red or green as black nodes.

A node can be at distance at most $r$ from $v$ in $T\setminus U$ only if it is joined to $v$ by a path consisting of green and black nodes only.
If $w$ is a green node, then $w$ is not $\varepsilon$-major and
the sum of the orders of the components of $T\setminus w$ such that the neighbor of $w$ in the component is black
is at most $\varepsilon|T|-1$ (the neighbors of $w$ in the two largest components are green or red).
Hence, the number of nodes reachable from $v$ through a path consisting of green and black nodes only such that $w$
is the last green node on the path is at most $\varepsilon|T|$ (here, we also count the node $w$).
Since there are at most $2^{r+1}+1$ green nodes, we conclude that the $r$-neighborhood of $v$ in $T\setminus U$
has at most $(2^{r+1}+1)\varepsilon|T|$ nodes.
\end{proof}

On the other hand, each tree can have only a bounded number of $\varepsilon$-major nodes.

\begin{lemma}
\label{lm-major-bound}
For every $\varepsilon\in (0,1)$,
the number of $\varepsilon$-major nodes of any tree $T$ is at most $\varepsilon^{-2}$.
\end{lemma}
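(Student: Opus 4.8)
The plan is a double-counting argument. Write $n:=|T|$ and let $U$ be the set of $\varepsilon$-major nodes. If $\varepsilon n\le 1$ then trivially $|U|\le n\le\varepsilon^{-1}\le\varepsilon^{-2}$, so assume $\varepsilon n>1$. For each node $u$ fix two components $C_1^u,C_2^u$ of $T\setminus u$ whose sizes are the largest and the second largest among all components (with $C_2^u:=\emptyset$ if $T\setminus u$ is connected, ties broken arbitrarily), and set $R(u):=V(T)\setminus(C_1^u\cup C_2^u)$. Then $u\in R(u)$, and if $u$ is $\varepsilon$-major then $|R(u)|=n-|C_1^u|-|C_2^u|\ge\varepsilon n$ directly from the definition of $\varepsilon$-major. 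Counting the pairs $(u,x)$ with $u\in U$ and $x\in R(u)$ in two ways gives $\sum_{u\in U}|R(u)|=\sum_{x\in V(T)}|U_x|$, where $U_x:=\{u\in U:x\in R(u)\}$, so it suffices to bound $|U_x|$ for every $x$.

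Fix $x$, root $T$ at $x$, and for a node $w$ let $\mathrm{sub}(w)$ denote the size of the subtree rooted at $w$. Take any $u\in U_x$ with $u\ne x$. The component of $T\setminus u$ containing $x$ is the one lying above $u$, of size $n-\mathrm{sub}(u)$; since $x\in R(u)$ this component is neither $C_1^u$ nor $C_2^u$, so $C_1^u$ and $C_2^u$ are the subtrees of two distinct children $v_1,v_2$ of $u$. A short check (if the above-component had size larger than the second-largest component size it would be the unique largest component, contradicting the choice $C_1^u$) shows $n-\mathrm{sub}(u)\le\min(|C_1^u|,|C_2^u|)$; since the subtrees of $v_1$ and $v_2$ are disjoint this forces $2(n-\mathrm{sub}(u))<\mathrm{sub}(u)$, i.e.\ $\mathrm{sub}(u)>2n/3$. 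Using $\varepsilon$-majority, $|C_1^u|\le(1-\varepsilon)n-|C_2^u|\le(1-\varepsilon)n-(n-\mathrm{sub}(u))=\mathrm{sub}(u)-\varepsilon n$, and combined with $|C_1^u|\ge n-\mathrm{sub}(u)$ we also obtain $\mathrm{sub}(u)\ge(1+\varepsilon)n/2$.

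Now list $U_x\setminus\{x\}=\{u_1,\dots,u_t\}$ in order of increasing depth in $T$. Each $u_j$ has $\mathrm{sub}(u_j)>n/2$, so they all lie on one root-to-leaf path; and since $\mathrm{sub}(u_{j+1})>n/2$ while the above-component of $u_j$ has size $<n/3$, the node $u_{j+1}$ lies in the subtree of the unique child of $u_j$ whose subtree is the largest component of $T\setminus u_j$ — namely the child realizing $|C_1^{u_j}|$. Hence $\mathrm{sub}(u_{j+1})\le|C_1^{u_j}|\le\mathrm{sub}(u_j)-\varepsilon n$ for every $j$. As the values $\mathrm{sub}(u_1)>\dots>\mathrm{sub}(u_t)$ lie in $[(1+\varepsilon)n/2,\,n-1]$ and drop by at least $\varepsilon n$ at each step, $t-1<\frac{1-\varepsilon}{2\varepsilon}$, so $t<\frac{1+\varepsilon}{2\varepsilon}$ and $|U_x|\le t+1<\frac{1+3\varepsilon}{2\varepsilon}$.

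Feeding this into the double count, $|U|\,\varepsilon n\le\sum_x|U_x|<n\cdot\frac{1+3\varepsilon}{2\varepsilon}$, whence $|U|<\frac{1+3\varepsilon}{2\varepsilon^2}\le\varepsilon^{-2}$ as soon as $\varepsilon\le 1/3$. For $\varepsilon>1/3$ the estimate $\mathrm{sub}(u)\ge(1+\varepsilon)n/2$ is already incompatible with $x$ being $\varepsilon$-major when $x\in U$ (it would make the largest component of $T\setminus x$ have size $\ge(1+\varepsilon)n/2>(1-\varepsilon)n$), so $U_x=\{x\}$ in that case, while $\frac{1+\varepsilon}{2\varepsilon}<2$ forces $|U_x\setminus\{x\}|\le 1$ in general; either way $|U_x|\le 1$ and the double count gives $|U|\le\varepsilon^{-1}\le\varepsilon^{-2}$. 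The heart of the argument — and the step that needs the most care — is the structural claim of the second and third paragraphs: that $x\in R(u)$ forces $u$ to sit high above $x$ (large subtree) and that $\varepsilon$-majority then makes these subtrees shrink by the fixed amount $\varepsilon n$ as one descends the ``heavy path'' from $x$; once that is in place, the double counting and the small case analysis for $\varepsilon>1/3$ are routine.
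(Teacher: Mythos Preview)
Your proof is correct and takes a genuinely different route from the paper's.

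The paper roots $T$ once at an arbitrary node, sets $\alpha(u)=\mathrm{sub}(u)/|T|$, and proves the key local fact that if $u$ is $\varepsilon$-major then every child $u'$ of $u$ satisfies $\alpha(u')\le\alpha(u)-\varepsilon$. It then partitions $(0,1]$ into $\lceil\varepsilon^{-1}\rceil$ intervals of length $\varepsilon$, observes that $\varepsilon$-major nodes whose $\alpha$-values fall in the same interval must have pairwise disjoint subtrees (hence at most $\varepsilon^{-1}$ of them per interval), and multiplies to get $\varepsilon^{-2}$.

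Your argument instead double-counts pairs $(u,x)$ with $u\in U$ and $x\in R(u)$, which reduces the problem to bounding $|U_x|$ for each $x$. Rooting at $x$, you show that the nodes of $U_x\setminus\{x\}$ lie on a single heavy path along which $\mathrm{sub}$ drops by at least $\varepsilon n$ at each step. Both proofs therefore rest on the same local observation --- passing from an $\varepsilon$-major node into its heaviest subtree loses at least $\varepsilon n$ vertices --- but aggregate it differently: the paper slices by the value of $\alpha$, while you slice by the ``witness'' $x$. The paper's version is shorter and uniform in $\varepsilon$; yours yields a slightly sharper constant (roughly $\tfrac{1}{2\varepsilon^2}$ for small $\varepsilon$) at the price of the separate treatment of $\varepsilon>1/3$.

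One small wording issue: your parenthetical justification for $n-\mathrm{sub}(u)\le\min(|C_1^u|,|C_2^u|)$ (``it would be the unique largest component'') is not quite the right reason. The correct point is simply that since the above-component is neither of the two chosen largest components, its size is at most that of the second largest, i.e.\ at most $|C_2^u|$. The conclusion stands.
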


\begin{proof}
If the tree $T$ has at most $\varepsilon^{-1}$ nodes, then there is nothing to prove.
So, we assume that the tree $T$ has more than $\varepsilon^{-1}$ nodes.
We can also assume that $T$ is rooted and all the edges are oriented towards the root.
Let $\alpha(u)$ for a node $u$ of $T$ be the number of nodes in the subtree of $u$ (including $u$ itself) divided by $|T|$.
We claim that if $u$ is $\varepsilon$-major, then $\alpha(u')\le\alpha(u)-\varepsilon$ for every child $u'$ of $u$.
Consider an $\varepsilon$-major node $u$ and suppose that it has a child $u'$ with $\alpha(u')>\alpha(u)-\varepsilon$.
The number of nodes in the component of $T\setminus u$ containing $u'$ is $\alpha(u')|T|$ and
in the component containing the parent of $u$ is $(1-\alpha(u))|T|$.
Since these two components together have more than $(1-\varepsilon)|T|$ nodes, $u$ cannot be $\varepsilon$-major.

Partition the interval $(0,1]$ into $\lceil\varepsilon^{-1}\rceil$ intervals
$J_i=((i-1)\varepsilon,i\varepsilon]\cap (0,1]$, $i\in[\lceil\varepsilon^{-1}\rceil]$.
Note that there is no $\varepsilon$-major node $u$ with $\alpha(u)\in J_1$ since $T$ has more than $\varepsilon^{-1}$ nodes.
Since any two $\varepsilon$-major nodes $u$ and $u'$ with the values $\alpha(u)$ and $\alpha(u')$
from the same interval $J_i$, $i\in[\lceil\varepsilon^{-1}\rceil]$, cannot be joined by an oriented path,
the subtrees of all such nodes are disjoint.
Consequently, the number of $\varepsilon$-major nodes $u$ with $\alpha(u)\in J_i$, $i\in[\lceil\varepsilon^{-1}\rceil]$,
is at most $((i-1)\varepsilon)^{-1}\le \varepsilon^{-1}$ (recall that there are no $\varepsilon$-major node $u$ with $\alpha(u)\in J_1$).
We conclude that the number of $\varepsilon$-major nodes $u$ does not exceed
$(\lceil\varepsilon^{-1}\rceil-1)\varepsilon^{-1}\le\varepsilon^{-2}$.
\end{proof}

We will enhance the signature of plane trees by countably many constants $c_i$, $i\in\NN$, to capture major nodes.
We will require that these constants are interpreted by different nodes of trees, and
some of the constants may not be interpreted at all.
A plane tree with some nodes being constants $c_i$ will be called a {\em plane $c$-tree}.
The considered signature contains countably many constants
but only finitely many constants can be interpreted in a finite plane $c$-tree
since we require that all constants are different.
A sequence of plane $c$-trees $(T_n)_{n\in\NN}$ is {\em null-partitioned}
if the following two conditions hold:
\begin{itemize}
\item if for every $k\in\NN$,
      there exists $n_0\in\NN$ such that all constants $c_1,\ldots,c_k$ are interpreted in $T_n$, $n\ge n_0$, and
\item if for every $\varepsilon>0$, there exist integers $n_0$ and $k_0$ such that
      all the $\varepsilon$-major nodes of every tree $T_n$, $n\ge n_0$, are among the constants $c_1,\ldots,c_{k_0}$.
\end{itemize}
Note that the first condition in the definition above guarantees that 
for every first order formula $\varphi$, there exists $n_0$ such that $\varphi$ can be evaluated in every $T_n$, $n\ge n_0$.

\begin{lemma}
\label{lm-c-trees}
Let $(T_n)_{n\in\NN}$ be a first order convergent sequence of plane trees such that the orders $T_n$ tend to infinity.
There exists a first order convergent null-partitioned sequence of plane $c$-trees $(T'_n)_{n\in\NN}$
obtained from a subsequence of $(T_n)_{n\in\NN}$ by interpreting some of the constants $c_i$.
\end{lemma}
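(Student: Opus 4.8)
The plan is, after passing to a suitable subsequence, to turn each tree into a plane $c$-tree by declaring its major nodes --- at all scales $\varepsilon=2^{-j}$ simultaneously --- to be the first constants $c_1,c_2,\dots$, ordered by ``how major'' they are, and then to pad the list of interpreted constants so that it grows without bound. Two elementary observations drive this. First, the family of $\varepsilon$-major nodes is monotone in $\varepsilon$: if $\varepsilon'\le\varepsilon$ then every $\varepsilon$-major node is $\varepsilon'$-major, since the defining inequality (sum of the two largest components of $T\setminus u$ is at most $(1-\varepsilon)|T|$) only weakens. Hence, writing $A_j(T)$ for the set of $2^{-j}$-major nodes of $T$, the sets $A_1(T)\subseteq A_2(T)\subseteq\cdots$ are nested, and by Lemma~\ref{lm-major-bound} we have $|A_j(T)|\le 2^{2j}=4^j$, a bound independent of $T$. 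Second, setting $\mathrm{lev}(u):=\min\{j:u\in A_j(T)\}\in\NN^*$, every $A_j(T)$ equals $\{u:\mathrm{lev}(u)\le j\}$, so along any linear order of the nodes of $T$ for which $\mathrm{lev}$ is non-decreasing, each $A_j(T)$ is an initial segment, of size at most $4^j$.

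Concretely, for each $n$ I would order the nodes of $T_n$ as $v^n_1\prec v^n_2\prec\cdots$ so that $\mathrm{lev}$ is non-decreasing, breaking ties by the left-to-right (pre-order) traversal determined by the plane embedding, and set $c^{T_n}_i:=v^n_i$ for every $i\le h(n)$, where $h(n):=\min(n,|T_n|)$, leaving the remaining constants uninterpreted; the $v^n_i$ are pairwise distinct and $h(n)\to\infty$. Denote the resulting plane $c$-tree by $\widehat{T}_n$. There are only countably many first order formulas over the signature of plane $c$-trees, and for each such formula $\varphi$ the Stone pairing $\langle\varphi,\widehat{T}_n\rangle\in[0,1]$ is defined for all sufficiently large $n$ (namely once $h(n)$ exceeds the largest index of a constant occurring in $\varphi$). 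A standard diagonal argument therefore yields a subsequence along which every Stone pairing converges; let $(T'_n)$ be the corresponding sequence of plane $c$-trees. Then $(T'_n)$ is first order convergent and, by construction, is obtained from a subsequence of $(T_n)_{n\in\NN}$ by interpreting some of the constants $c_i$.

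It remains to verify that $(T'_n)$ is null-partitioned. The first condition holds because $h(n)\to\infty$ along the subsequence as well, so for every $k$ all of $c_1,\dots,c_k$ are eventually interpreted. For the second, given $\varepsilon>0$ choose $j$ with $2^{-j}<\varepsilon$; then every $\varepsilon$-major node of any $T_n$ is $2^{-j}$-major, hence lies in $A_j(T_n)=\{v^n_1,\dots,v^n_{|A_j(T_n)|}\}$ with $|A_j(T_n)|\le 4^j$, so as soon as $h(n)\ge 4^j$ all $\varepsilon$-major nodes of $T_n$ are among $c_1,\dots,c_{4^j}$, and we may take $k_0=4^j$. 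The only point requiring genuine care --- and the only non-routine step --- is arranging the bookkeeping so that a single ordering of the constants respects all scales $\varepsilon=2^{-j}$ at once while keeping the relevant initial segment bounded uniformly in $n$, and simultaneously leaves room to interpret unboundedly many constants for the first condition; this is exactly what the nestedness of the $A_j$ together with the $T$-independent bound of Lemma~\ref{lm-major-bound} (plus the padding by arbitrary ``filler'' nodes) make possible, and everything else is standard.
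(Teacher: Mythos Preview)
Your proof is correct and follows essentially the same approach as the paper: use Lemma~\ref{lm-major-bound} to bound the number of $\varepsilon$-major nodes uniformly in $T$, assign these nodes to the first constants, pad with arbitrary nodes so that arbitrarily many constants eventually get interpreted, and invoke compactness to pass to a first order convergent subsequence. The only organizational difference is that the paper proceeds level by level---at stage $k$ it adds the $2^{-k}$-major nodes not already named to a fresh block of constants and extracts a convergent subsequence, then diagonalizes---whereas you exploit the nestedness $A_1(T)\subseteq A_2(T)\subseteq\cdots$ to fix a single ordering of the nodes by $\mathrm{lev}$ up front and take one subsequence at the end; this is a minor streamlining, not a different idea.
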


\begin{proof}
We start with constructing first order convergent sequences $(T_n^k)_{n\in\NN}$ of plane $c$-trees, $k\in\NN_0$.
Exactly the constants $c_1,\ldots,c_{2^{2k-1}}$ will be interpreted in $(T_n^k)_{n\in\NN}$ for $k\in\NN$, and
all $2^{-k+1}$-major nodes of every $T_n^k$, $n\in\NN$, will be among the constants.
Let $T^0_n=T_n$ for every $n\in\NN$.
Consider $k\in\NN_0$ and assume that we have constructed the sequence $(T_n^k)_{n\in\NN}$.
Let $n_0$ be such that every $T_n^k$, $n>n_0$, has at least $2^{2k+1}$ nodes.
Assign $2^{-k}$-major nodes of $T_n^k$, $n\in\NN$, that are not already some of the constants,
to different constants $c_{2^{2k-1}+1},\ldots,c_{2^{2k+1}}$;
since the number of $2^{-k}$-major nodes does not exceed $2^{2k}$ by Lemma~\ref{lm-major-bound}, this is possible.
Next assign the constants $c_{2^{2k-1}+1},\ldots,c_{2^{2k+1}}$ that are not interpreted yet
to the remaining nodes of $T_n^k$ in a way that all constants are assigned to different nodes.
Let $S_{n-n_0}$ be the resulting plane $c$-tree.
We set $(T_n^{k+1})_{n\in\NN}$ to be a first order convergent subsequence of this sequence $(S_n)_{n\in\NN}$.

Set $T'_n=T_n^n$ for every $n\in\NN$.
Let $\psi$ be a first order formula and let $i$ be the largest index of a constant $c_i$
appearing in $\psi$ (if none of the constants appears in $\psi$, let $i=1$).
Let $k$ be a positive integer such that $i\le 2^{2k-1}$.
Since the reduct of $(T'_n)_{n=k}^\infty$ obtained by omitting all constants $c_j$, $j>2^{2k-1}$, 
is a subsequence of $(T_n^k)_{n\in\NN}$, and
the sequence $(T_n^k)_{n\in\NN}$ is first order convergent,
the sequence of Stone pairings $\langle\psi,T'_n\rangle$ converges.
Hence, the sequence $(T'_n)_{n\in\NN}$ is first order convergent.

We now show that the sequence $(T'_n)_{n\in\NN}$ is null-partitioned.
Let $\varepsilon>0$ and let $k$ be a positive integer such that $\varepsilon\ge 2^{-k+1}$.
All $2^{-k+1}$-major nodes are among the constants $c_1,\ldots,c_{2^{2k-1}}$ in every tree $T_n^k$, $n\in\NN$.
Set $n_0=k$ and $k_0=2^{2k-1}$.
Since all the $\varepsilon$-major nodes of every tree $T'_n$, $n\ge n_0$, are among the constants $c_1,\ldots,c_{k_0}$ and
the choice of $\varepsilon$ was arbitrary, the sequence $(T'_n)_{n\in\NN}$ is null-partitioned.
\end{proof}

The first order properties are closely linked with Ehrenfeucht-Fra{\"\i}ss\'e games~\cite{bib-ebbinghaus+}.
It is well-known that two structures satisfy the same first order sentences with quantifier depth $d$ if and only if
the duplicator has a winning strategy for the $d$-round Ehrenfeucht-Fra{\"\i}ss\'e game played on two structures.
We slightly alter the standard notion of Ehrenfeucht-Fra{\"\i}ss\'e games to fit our setting of plane $c$-trees
in the way we now present.
The $d$-round Ehrenfeucht-Fra{\"\i}ss\'e game is played by two players, called the spoiler and the duplicator.
In the $k$-th round, the spoiler chooses a node of one of the trees (the spoiler can choose a different tree
in different rounds) and places the $k$-th pebble on that node.
The duplicator responds with placing the $k$-th pebble on a node of the other tree.
At the end of the game,
the duplicator wins
if the mapping between the two plane $c$-trees that
maps the node with the $k$-pebble in the first tree to the node with the $k$-pebble in the other tree
is an isomorphism preserving the relations $\parnt$ and $\succ$ and the first $d$ constants.
The standard argument used in the classical setting yields that
the two plane $c$-trees satisfy the same $d$-sentences if and only if
the duplicator has a winning strategy for the $d$-round Ehrenfeucht-Fra{\"\i}ss\'e game.

The next lemma allows us to prove an extension of Hanf's theorem to plane $c$-trees with unbounded degrees.
Before stating the lemma, we need an additional definition.
The relative position of two nodes $v$ and $v'$ in a plane $c$-tree can be described
by a path in Gaifman's graph between them. Let $u_0\cdots u_k$ be such path, i.e., $u_0=v$ and $u_k=v'$.
This path can be associated with a sequence of words \verb|up|, \verb|down|, \verb|left| and \verb|right| as follows:
$w_i=\verb|up|$ if $\parnt(u_i,u_{i-1})$, $w_i=\verb|down|$ if $\parnt(u_{i-1},u_i)$,
$w_i=\verb|left|$ if $\succ(u_{i-1},u_i)$, and $w_i=\verb|right|$ if $\succ(u_i,u_{i-1})$.
The path is called {\em strongly canonical}
if it is associated with a sequence of the form $\verb|up|^a\verb|left|^b\verb|down|^c$ or $\verb|up|^a\verb|right|^b\verb|down|^c$ with $b>0$, and
it is {\em weakly canonical} if the associated sequence is $\verb|up|^a\verb|down|^b$.
We will refer to a path as {\em canonical} if it is either strongly canonical or weakly canonical.

For $k\in\NN$, the {\em $k$-position} of two nodes $v$ and $v'$ is the sequence of words \verb|up|, \verb|down|, \verb|left| and \verb|right|
associated with the shortest strongly canonical path from $v$ to $v'$ if such a path has length at most $k$;
otherwise, 
if there is a weakly canonical path from $v$ to $v'$ of length at most $k$,
then it is the sequence associated with it, and
if such a path does not exist, then it is undefined.
We say that two pairs of nodes have the same $k$-position
if either their $k$-positions are the same sequence or
they are both undefined.

\begin{lemma}
\label{lm-hanf}
For all pairs of integers $d$ and $\ell$, $0\le\ell\le d$,
there exist integers $\beta_{d,\ell}$ and $\gamma_{d,\ell}$ that satisfy the following.
Suppose that the $d$-round Ehrenfeucht-Fra{\"\i}ss\'e game is played on two (not necessarily finite) plane $c$-trees $T$ and $T'$ and
the first $\ell$ rounds have already been played.
Further suppose that 
if one of the plane $c$-trees $T$ and $T'$ has less than $\gamma_{d,\ell}$ nodes of some $\beta_{d,\ell}$-Hintikka type,
then the number of nodes of this $\beta_{d,\ell}$-Hintikka type in the trees $T$ and $T'$ are the same.
If for every $i$ and $j$, $1\le i,j\le\ell$,
the $\beta_{d,\ell}$-Hintikka types of the nodes with the $i$-th pebble are the same and
the nodes with the $i$-th and the $j$-th pebbles have the same $10^{d-\ell}$-position in $T$ and $T'$,
then the duplicator has a winning strategy.
\end{lemma}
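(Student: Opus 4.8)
The plan is to prove Lemma~\ref{lm-hanf} by downward induction on $\ell$, from $\ell=d$ down to $\ell=0$, mimicking the standard proof of Hanf's locality theorem but adapted to the ``canonical path'' notion of position that is appropriate for plane $c$-trees. For the base case $\ell=d$ there are no more rounds to play, so one only needs to check that the partial map determined by the first $d$ pebbles is a partial isomorphism preserving $\parnt$, $\succ$ and the first $d$ constants; this follows because having the same $1$-position (for suitable $\beta_{d,d}$) already records whether two pebbled nodes are equal, adjacent via $\parnt$, or consecutive via $\succ$, and equality of $\beta_{d,d}$-Hintikka types (with $\beta_{d,d}$ large enough to see the names of the first $d$ constants) records which pebbled nodes are which constants. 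So it suffices to take $\beta_{d,d}$ and $\gamma_{d,d}$ large enough for these bookkeeping purposes.

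For the inductive step, suppose the constants $\beta_{d,\ell+1},\gamma_{d,\ell+1}$ work after $\ell+1$ rounds; I will choose $\beta_{d,\ell}$ much larger than $\beta_{d,\ell+1}$ and $\gamma_{d,\ell}$ much larger than $\gamma_{d,\ell+1}$ (both also large compared to $10^{d-\ell}$), and describe the duplicator's response in round $\ell+1$. Say the spoiler plays a node $x$ in $T$ (the case of $T'$ is symmetric). The duplicator must produce $x'$ in $T'$ so that, with the pebble count now $\ell+1$: (i) $x'$ has the same $\beta_{d,\ell+1}$-Hintikka type as $x$; (ii) for each earlier pebble $p_i$, the pair $(p_i,x)$ in $T$ and $(p_i',x')$ in $T'$ have the same $10^{d-\ell-1}$-position; and (iii) the Hintikka-type-counting hypothesis is restored at level $\beta_{d,\ell+1},\gamma_{d,\ell+1}$. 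There are two cases. \emph{Near case:} $x$ lies within canonical distance $\le 10^{d-\ell-1}$ of some pebbled node $p_i$. Then the $10^{d-\ell}$-position of $(p_i,p_j)$ being the same in both trees, together with equality of $\beta_{d,\ell}$-Hintikka types of the pebbled nodes (with $\beta_{d,\ell}$ chosen so large that a $\beta_{d,\ell}$-type determines the isomorphism type of a canonical ball of radius $10^{d-\ell-1}$ around the node), lets the duplicator read off a matching node $x'$ inside the corresponding ball around $p_i'$; one checks the $10^{d-\ell-1}$-positions to all other pebbles are preserved using that canonical paths compose in a controlled way and $10^{d-\ell-1}$ is a tenth of $10^{d-\ell}$. \emph{Far case:} $x$ is at canonical distance $>10^{d-\ell-1}$ from every pebbled node. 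Then the duplicator wants $x'$ far from every pebbled node with the same $\beta_{d,\ell+1}$-Hintikka type as $x$; such an $x'$ exists because either there are at least $\gamma_{d,\ell+1}$ nodes of that type in $T'$ (and the bounded canonical balls around the $\le\ell$ pebbles plus the finitely many named constants cannot exhaust that many, provided $\gamma_{d,\ell}$ — hence the supply guaranteed in $T$ and matched in $T'$ — was chosen large enough relative to $\ell$, $10^{d-\ell-1}$ and the number of constants $d$), or there are fewer, in which case $T$ and $T'$ have equally many nodes of that type and a counting argument picks $x'$ avoiding the pebbles' balls. In the far case all new positions $(p_i',x')$ are undefined, matching $T$, so (ii) holds trivially; (iii) holds because removing one node from a large type class keeps it large or, if it was exactly at the threshold, the symmetric choice keeps the counts equal.

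The main obstacle I expect is the \emph{far case} bookkeeping: one must verify that choosing $x'$ generically (far from all pebbles, right Hintikka type, count condition restored) is always possible, and this is exactly where the quantitative hypothesis of the lemma — ``few nodes of a type $\Rightarrow$ exactly equal counts'' — must be propagated correctly through the induction, with $\gamma_{d,\ell}$ chosen to dominate the total number of nodes lying in canonical $10^{d-\ell-1}$-balls around $\ell$ pebbles together with the $d$ constant nodes. A secondary subtlety, special to plane $c$-trees as opposed to Hanf's original bounded-degree setting, is that the relevant notion of ``ball'' is that of the canonical-path $k$-position rather than Gaifman distance: one has to check that a bounded-depth local (indeed $\beta_{d,\ell}$-Hintikka) type of a node genuinely controls the plane-tree structure of its canonical neighborhood — the chain of ancestors up to depth $10^{d-\ell-1}$ together with the left/right sibling orders and the subtrees hanging off that chain, truncated appropriately — so that the duplicator's reconstructed $x'$ in the near case is well-defined and the induced map stays a $\parnt,\succ$-isomorphism. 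Once these two points are handled, assembling the duplicator's global strategy and concluding via the Ehrenfeucht--Fra{\"\i}ss\'e characterization is routine.
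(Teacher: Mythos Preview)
Your overall architecture---downward induction on $\ell$, a near/far case split for the spoiler's move, and the base case analysis---matches the paper's proof. The base case is fine.

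However, there is a genuine gap that affects both the near and the far case: you implicitly assume that a canonical ball of radius $10^{d-\ell-1}$ around a node has bounded size (depending only on the radius), as in classical Hanf for bounded-degree structures. In plane $c$-trees of unbounded degree this is false. A canonical path from a pebbled node $w_i$ has the form $\verb|up|^a\verb|left|^b\verb|down|^c$ (or with \verb|right|), and once $c>0$ there are unboundedly many choices for the endpoint, since a node may have arbitrarily many children. Consequently a $\beta_{d,\ell}$-Hintikka type does \emph{not} determine the isomorphism type of the canonical ball around the node, so in the near case the duplicator cannot simply ``read off'' a matching $x'$; and in the far case the balls around the $\ell$ pebbles can absorb arbitrarily many nodes of a given type, so the argument ``balls cannot exhaust that many'' does not go through. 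You flag the unbounded-degree issue as a ``secondary subtlety'' to be checked, but it is in fact the central technical obstacle, and the naive check fails.

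The paper resolves this differently in each case. In the near case with $c>0$, it passes to the ancestor $z$ of $w$ at height $c$, and to the corresponding $z'$ in $T'$; it then counts the children of $z$ (respectively $z'$) that have a descendant at depth $c-1$ of the target Hintikka type, and shows---using that $\beta_{d,\ell}$ is large enough---that either these counts are equal or both exceed $\ell\cdot(10^{d-\ell}+1)$, whence a pigeonhole argument produces a suitable child of $z'$ not close to any pebble. In the far case, rather than bounding ball sizes, the paper argues by contradiction: if every node of $w$'s type in $T'$ lies near some pebble, then there are at most $2\cdot 10^{d-\ell}\ell$ ancestors in $T'$ satisfying a certain local formula $\psi$; the counting hypothesis transfers this bound to $T$, and the Hintikka-type match then forces every such ancestor in $T$ to lie near a pebble too---hence $w$ itself is near a pebble, contradicting the far-case assumption. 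Both arguments are counting arguments rather than ball-isomorphism arguments, and they are what your plan is missing.
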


\begin{proof}
Fix $d>0$. We prove the lemma by induction on the number of the remaining rounds of the game, i.e., on $d-\ell$.
We show that the lemma holds with the choice $\beta_{d,\ell}=(10d+12)^{d-\ell+1}$.
During the proof, we will invoke various lower bounds on $\gamma_{d,\ell}$ and
we eventually take $\gamma_{d,\ell}$ to be the largest of these lower bounds.

Suppose that $\ell=d$.
Since the $1$-positions of the pairs of the corresponding pebbled nodes are the same in $T$ and $T'$,
the relations $\parnt$ and $\succ$ induced by the pebbled nodes are isomorphic through the mapping that
maps a pebbled node in $T$ to the corresponding pebbled node of $T'$.
Since the $\beta_{d,d}$-Hintikka types of the nodes with the $i$-th pebble in $T$ and $T'$, $1\le i\le d$, are the same,
the first $d$ constants appear on the corresponding pebbled nodes (if they appear on any of them).
We conclude that the duplicator has won the game.
Note that we can choose $\gamma_{d,\ell}$ arbitrarily, e.g., equal to $1$.

Suppose that $\ell<d$ and that we have proven the existence of $\beta_{d,\ell+1}$ and $\gamma_{d,\ell+1}$.
By symmetry, we can assume that the spoiler has placed the $(\ell+1)$-th pebble on a node $w$ of $T$.
Let $w_1,\ldots,w_{\ell}$ be the nodes with the first $\ell$ pebbles in $T$, and
let $w'_1,\ldots,w'_{\ell}$ be the nodes with the first $\ell$ pebbles in $T'$.
We distinguish several cases based on the existence of a short canonical path from one of the nodes $w_1,\ldots,w_{\ell}$ to $w$.

Suppose first that there exists a canonical path from $w_i$, $1\le i\le\ell$, to $w$ that
has length at most $5\cdot 10^{d-\ell-1}$ and let $W$ be the sequence associated with the path.
Let $W$ be the string of the form $\verb|up|^a\verb|left|^b\verb|down|^c$ or $\verb|up|^a\verb|right|^b\verb|down|^c$ associated with the path.
Among all choices of $w_i$ such that a canonical path from $w_i$ to $w$ of length at most $5\cdot 10^{d-\ell-1}$ exists,
choose the one with $c$ minimal.
We first assume that $c=0$.
Let $w'$ be the node of $T'$ such that there exists a canonical path from $w'_i$ to $w'$ that is also associated with $W$.
The existence of $w'$ follows from that the nodes $w_i$ and $w'_i$ have the same $\beta_{d,\ell}$-Hintikka type and
$\beta_{d,\ell}\ge 5\cdot 10^{d-\ell-1}$.
Note that the node $w'$ is uniquely determined and
that the $10^{d-\ell-1}$-position between $w'_j$ and $w'$ is the same as
the $10^{d-\ell-1}$-position between $w_j$ and $w$ for every $j=1,\ldots,\ell$.
Finally, since $\beta_{d,\ell}\ge 5\cdot 10^{d-\ell-1}+\beta_{d,\ell+1}$,
the $\beta_{d,\ell+1}$-Hintikka types of $w$ and $w'$ are the same.
We conclude that the duplicator will have a winning strategy for the $d-\ell-1$ remaining rounds of the game,
if the duplicator places the $(\ell+1)$-th pebble on $w'$.

We now assume that $c>0$. Note that $c\le 5\cdot 10^{d-\ell-1}$.
Let $z$ be the node at distance $c$ on the path from $w$ to the root and
let $z'$ be the node such that
the $5\cdot 10^{d-\ell-1}$-position of $w_i$ and $z$ is the same as
the $5\cdot 10^{d-\ell-1}$-position of $w'_i$ and $z'$.
Note that $z'$ must exist and is uniquely determined
since the nodes $w_i$ and $w'_i$ have the same $\beta_{d,\ell}$-Hintikka type.
Consider the $(c-1+\beta_{d,\ell+1})$-formula $\psi(u)$ that is satisfied
if a node $u$ has a descendant at depth $c-1$ with the same $\beta_{d,\ell}$-Hintikka type as $w$.
Let $m$ be the number of children $u$ of $z$ satisfying $\psi(u)$ and let $m'$ be the number of such children of $z'$.
Since $\beta_{d,\ell}\ge (10\ell+2\cdot 5)\cdot 10^{d-\ell-1}+\ell+1+\beta_{d,\ell+1}$,
it holds that $m=m'$ or both $m$ and $m'$ are at least $\ell\cdot (10^{d-\ell}+1)+1$.

We say that a child $u$ of $z$ is $w_i$-close 
if there is a canonical path from $w_i$ to $u$ associated with $\verb|up|^{a'}\verb|right|^{b'}$ or
with $\verb|up|^{a'}\verb|left|^{b'}$ for some $a'+b'\le 5\cdot 10^{d-\ell-1}-c$.
Similarly, a child $u$ of $z'$ is $w'_i$-close if such a path exists from $w'_i$ to $u$.
Let $m_i$ be the number of $w_i$-close children $u$ of $z$ satisfying $\psi(u)$, and
let $m'_i$ be the number of $w'_i$-close children $u$ of $z'$ satisfying $\psi(u)$.
Note that every $m_i$ and $m'_i$ is at most $10\cdot 10^{d-\ell-1}+1=10^{d-\ell}+1$.
Since the $10^{d-\ell}$-positions of all the pairs of pebbled nodes are the same in $T$ and $T'$,
$w_i$ is a descendant of $z$ at depth at most $5\cdot 10^{d-\ell-1}$ if and only if $w'_i$ is a descendant of $z'$ at the same depth.
Since $\beta_{d,\ell}\ge 5\cdot 10^{d-\ell-1}+\beta_{d,\ell+1}$ and
$w_i$ and $w'_i$ have the same $\beta_{d,\ell}$-Hintikka type,
it holds $m_i=m'_i$ for every $i=1,\ldots,\ell$.

Let $\tilde m$ be the number of children $u$ of $z$ that satisfy $\psi(z)$ and are $w_i$-close for some $i$, and
let ${\tilde m}'$ be the number of children $u$ of $z'$ that satisfy $\psi(z)$ and are $w'_i$-close for some $i$.
Observe that if the same child $u$ of $z$ is counted both in $m_i$ and $m_j$ for some $i$ and $j$,
then the nodes $w_i$ and $w_j$ are joined by a strongly canonical path of length at most $10^{d-\ell}$ and
the corresponding child of $z'$ is also counted both in $m'_i$ and $m'_j$.
This yields that $\tilde m={\tilde m}'$.
The choice of $c$ as small as possible implies that the node $w$ is not a descendant of a $w_i$-close child of $z$.
Since $\tilde m\le m_1+\cdots+m_{\ell}\le (10^{d-\ell}+1)\ell$, we conclude that $m>\tilde m$ and thus $m'>{\tilde m}'$.
Consequently, there exists a child $u$ of $z'$ satisfying $\psi(u)$ that is not $w'_i$-close for any $i\le\ell$.
Let $w'$ be the descendant of this $u$ at depth $c-1$ that has the same $\beta_{d,\ell+1}$-Hintikka type as $w$.
The duplicator now places the $(\ell+1)$-th pebble on $w'$.
Since the $10^{d-\ell-1}$-position of $w'$ to any $w'_i$, $i\le\ell$ is the same as the $10^{d-\ell-1}$-position of $w$ to $w_i$,
the duplicator has a winning strategy for the remaining $d-\ell-1$ rounds of the game by induction.

It remains to consider the case that there is no canonical path from any $w_i$ to $w$ of length at most $5\cdot 10^{d-\ell-1}$.
If $T'$ has a node $w'$ with the same $\beta_{d,\ell+1}$-Hintikka type as $w$
with no canonical path of length at most $10^{d-\ell-1}$ to any of the nodes $w'_1,\ldots,w'_{\ell}$,
the duplicator places the $(\ell+1)$-th pebble on $w'$ and the existence of the winning strategy follows by induction.
Otherwise, there exist $k_1,\ldots,k_{\ell}$, $0\le k_i\le 10^{d-\ell-1}$, such that
every node of the same $\beta_{d,\ell+1}$-Hintikka type as $w$
is a descendant of $p^{k_i}(w'_i)$ for some $i$, $1\le i\le\ell$ at depth at most $k_i+10^{d-\ell-1}$,
where $p$ is a function that assigns a node its parent.

First suppose that $w$ is at distance less than $2\cdot 10^{d-\ell-1}$ from the root in $T$ and
consider a node $w'$ in $T'$ of the same $\beta_{d,\ell+1}$-Hintikka type as $w$ (which exists since $\gamma_{d,\ell}>0$).
Note that the distance of $w'$ from the root in $T'$ is the same as that of $w$ in $T$.
The node $w'$ is a descendant of $p^{k_i}(w'_i)$ for some $i$, $1\le i\le\ell$.
Hence, $w'_i$ is at distance at most $k_i+2\cdot 10^{d-\ell-1}\le 3\cdot 10^{d-\ell-1}$ from the root.
It follows that $w_i$ is at the same distance from the root in $T$ because they have the same $\beta_{d,\ell}$-Hintikka type.
This implies that there is a canonical path from $w_i$ to $w$ of length at most $5\cdot 10^{d-\ell-1}$,
which contradicts our assumption.

We now assume that $w$ is at distance at least $2\cdot 10^{d-\ell-1}$ from the root.
Let $\psi(u)$ be a local $(2\cdot 10^{d-\ell-1}+\beta_{d,\ell+1})$-formula expressing that
$u$ has a descendant at depth $2\cdot 10^{d-\ell-1}$ of the same $\beta_{d,\ell+1}$-Hintikka type as $w$.
Note that any node of the same $\beta_{d,\ell+1}$-Hintikka type as $w$ (in $T$ or in $T'$)
is at distance at least $2\cdot 10^{d-\ell-1}$ from the root and
must thus be contained in the subtree of a node $u$ satisfying $\psi(u)$ at depth $2\cdot 10^{d-\ell-1}$.
Moreover, every node $u$ of $T'$ that satisfies $\psi(u)$ is one of the nodes $p^{k_i+\kappa}(w'_i)$,
$1\le i\le\ell$ and $0\le\kappa\le 2\cdot 10^{d-\ell-1}$.
Hence, $T'$ contains at most $2\cdot 10^{d-\ell}\cdot\ell$ nodes $u$ satisfying $\psi(u)$.
If $\gamma_{d,\ell}>2\cdot 10^{d-\ell-1}\cdot\ell$, then the number of such nodes $u$ is the same in $T$.
If $\beta_{d,\ell}\ge 5\cdot 10^{d-\ell-1}+\beta_{d,\ell+1}$,
all such nodes $u$ in $T$ are equal to $p^{\kappa}(w_i)$ for some $i$ and $\kappa\le 3\cdot 10^{d-\ell-1}$ (because the same holds in $T'$).
Since every node of $T$ of the same $\beta_{d,\ell+1}$-Hintikka type as $w$
is contained in the subtree of a node $u$ satisfying $\psi(u)$ at depth $2\cdot 10^{d-\ell-1}$,
all nodes of the same $\beta_{d,\ell+1}$-Hintikka type as $w$
are joined to some $w_i$ by a canonical path of length at most $5\cdot 10^{d-\ell-1}$.
In particular, there is a canonical path from $w_i$ to $w$ of length at most $5\cdot 10^{d-\ell-1}$,
which is impossible.
\end{proof}

An immediate corollary of Lemma~\ref{lm-hanf} is the following variant of Hanf's theorem for plane trees.

\begin{theorem}
\label{thm-hanf}
For every integer $d$, there exist integers $D$ and $\Gamma$ such that
for any two (not necessarily finite) plane $c$-trees $T$ and $T'$,
if the trees $T$ and $T'$ have the same number of nodes of each $D$-Hintikka type or
the number of the nodes of this type is at least $\Gamma$ in both $T$ and $T'$,
then the sets of $d$-sentences satisfied by $T$ and $T'$ are the same.
\end{theorem}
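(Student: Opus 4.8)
The plan is to obtain Theorem~\ref{thm-hanf} as the special case $\ell=0$ of Lemma~\ref{lm-hanf}, combined with the Ehrenfeucht--Fra{\"\i}ss\'e characterisation of first order equivalence up to a fixed quantifier depth recorded earlier in this section.

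Given $d$, I would take $D:=\beta_{d,0}$ and $\Gamma:=\gamma_{d,0}$, the two constants provided by Lemma~\ref{lm-hanf} for the pair $(d,0)$. Now let $T$ and $T'$ be plane $c$-trees satisfying the hypothesis of the theorem, so that for every $D$-Hintikka type either $T$ and $T'$ have the same number of nodes of that type, or both have at least $\Gamma$ such nodes. The key (elementary) observation is that this statement is logically equivalent, type by type, to: whenever one of $T,T'$ has fewer than $\Gamma=\gamma_{d,0}$ nodes of a given $D$-Hintikka type, then $T$ and $T'$ have equally many nodes of that type. This is exactly the counting hypothesis of Lemma~\ref{lm-hanf} when $\ell=0$; and the other hypothesis of that lemma, which quantifies over pairs of already-pebbled nodes $1\le i,j\le\ell$, is vacuous for $\ell=0$ since no rounds of the game have been played.

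Consequently Lemma~\ref{lm-hanf}, applied with $\ell=0$ to the $d$-round Ehrenfeucht--Fra{\"\i}ss\'e game on $T$ and $T'$ before any pebble is placed, asserts that the duplicator has a winning strategy in that game. Invoking the standard equivalence that two plane $c$-trees satisfy the same $d$-sentences precisely when the duplicator wins the $d$-round game, we conclude that $T$ and $T'$ satisfy the same $d$-sentences, as required.

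I do not expect any genuine obstacle here: all of the substantive work --- the induction on the number of remaining rounds, the bookkeeping with canonical paths and $k$-positions, and the interplay with Hintikka types --- has already been done inside the proof of Lemma~\ref{lm-hanf}. The only step deserving a second's attention is the reformulation of the theorem's ``equal counts or both large'' hypothesis into the contrapositive ``small on one side forces equal counts'' form used by Lemma~\ref{lm-hanf}, together with the trivial remark that $D$ and $\Gamma$ depend only on $d$ because $\beta_{d,0}$ and $\gamma_{d,0}$ do.
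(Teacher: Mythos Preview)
Your proposal is correct and matches the paper's approach exactly: the paper simply states that Theorem~\ref{thm-hanf} is an immediate corollary of Lemma~\ref{lm-hanf}, which is precisely the $\ell=0$ specialisation you describe combined with the Ehrenfeucht--Fra{\"\i}ss\'e characterisation. Your reformulation of the counting hypothesis and the observation that the pebble conditions are vacuous for $\ell=0$ spell out what the paper leaves implicit.
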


We are now ready to distill the essence of first order properties of a first order convergent sequence of plane trees.
Fix a first order convergent null-partitioned sequence of plane $c$-trees $(T_n)_{n\in\NN}$.
We associate the sequence with two functions, $\nu:\FOlocal\to\NN^*$ and $\mu:\FOlocal\to [0,1]$,
which we will refer to as the {\em discrete Stone measure} and the {\em Stone measure} of the sequence (strictly speaking,
the Stone measure as we define it is not a measure in the sense of measure theory,
however, we believe that there is no danger of confusion).
If $\psi\in\FOlocal$, then $\nu(\psi)$ is the limit of the number of nodes $u$ such that $T_n\models\psi(u)$, and
$\mu(\psi)$ is the limit of $\langle\psi,T_n\rangle$.
If $M$ is a plane $c$-tree modeling,
then it is also possible to speak about its discrete Stone measure and its Stone measure
by setting $\nu(\psi)=|\{u\mbox{ s.t. }M\models\psi(u)\}|$ and $\mu(\psi)=\langle\psi,M\rangle$ for $\psi\in\FOlocal$.

Let $\RR$ be the ring formed by finite unions of basic sets of Hintikka chains.
Note that $\RR$ can be equivalently defined as the set containing finite unions of disjoint basic sets of Hintikka chains.
Let $X\in\RR$ be the union of disjoint basic sets corresponding to Hintikka formulas $\psi_1,\ldots,\psi_k$ and
define $\mu_{\RR}(X)=\mu(\psi_1)+\cdots+\mu(\psi_k)$.
Clearly, the mapping $\mu_{\RR}$ is additive.
Since every countable union of non-empty pairwise disjoint sets from $\RR$ that is contained in $\RR$ is finite,
$\mu_{\RR}$ is a premeasure.
By Carath\'eodory's Extension Theorem, the premeasure $\mu_{\RR}$ extends to a measure
on the $\sigma$-algebra formed by Borel sets of Hintikka chains.
We will also use $\mu$ to denote this measure, which is uniquely determined by the Stone measure $\mu$.
Let us remark that the existence of $\mu$ on the $\sigma$-algebra formed by Borel sets of Hintikka chains
can be derived from~\cite[Theorem 8]{bib-pom-unified}
but we have preferred giving a simple direct argument for the clarity and completeness of our exposition.

The following lemma relates first order convergent sequences of plane trees and their limit modelings.
In particular, the lemma reduces the problem of constructing a limit modeling of
a first order convergent null-partitioned sequence of plane $c$-trees,
i.e., a modeling that agrees on all Stone pairings, to constructing a limit modeling that
agrees on Stone pairings involving formulas from $\FOlocal$ only.

\begin{lemma}
\label{lm-modeling}
Let $(T_n)_{n\in\NN}$ be a first order convergent null-partitioned sequence of plane $c$-trees with increasing orders and
let $\nu$ and $\mu$ be its discrete Stone measure and Stone measure, respectively.
If $M$ is a plane $c$-tree modeling such that
\begin{itemize}
\item the discrete Stone measure of $M$ is $\nu$,
\item the Stone measure of $M$ is $\mu$,
\item the $r$-neighborhood in $M\setminus\{c_i,i\in\NN\}$ of each node in $M\setminus\{c_i,i\in\NN\}$
      has zero measure for every $r\in\NN$, 
\end{itemize}
then $M$ is a limit modeling of $(T_n)_{n\in\NN}$.
\end{lemma}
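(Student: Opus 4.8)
The plan is to reduce a general Stone pairing $\langle\psi,\cdot\rangle$ to two kinds of data already pinned down by the hypotheses: the truth values of sentences, controlled by the discrete Stone measure $\nu$ through Theorem~\ref{thm-hanf}, and the Stone pairings of local formulas with a single free variable, controlled by the Stone measure $\mu$; the third hypothesis on $M$ is precisely what forces the interaction between distinct free variables to contribute nothing to $\langle\psi,M\rangle$, mirroring the fact that along the sequence such interaction contributes nothing by Lemma~\ref{lm-major-neighborhood}. So fix a first order formula $\psi(x_1,\dots,x_k)$. Since $(T_n)_{n\in\NN}$ is first order convergent, $\lim_n\langle\psi,T_n\rangle$ exists and we only need to identify it with $\langle\psi,M\rangle$ (for $k=0$ this is already the case: by Theorem~\ref{thm-hanf}, for every $d$ and all large $n$ the trees $M$ and $T_n$ satisfy the same $d$-sentences).

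First I would pass to a Gaifman normal form of $\psi$: in every relational structure, and in particular in each $T_n$ and in $M$, $\psi$ is equivalent to a Boolean combination of basic local sentences $\theta_1,\dots,\theta_q$ (using the relation symbols only) and of formulas that are $t$-local around the tuple consisting of the free variables and the constants $c_1,\dots,c_m$ occurring in $\psi$, for some radius $t$. Let $d'$ be the largest quantifier depth among the $\theta_j$; each $\theta_j$ is then a $d'$-sentence, and Theorem~\ref{thm-hanf} provides $D$ and $\Gamma$ such that any two plane $c$-trees whose numbers of nodes of each $D$-Hintikka type agree or are at least $\Gamma$ in both satisfy the same $d'$-sentences. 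For each $D$-Hintikka type $\tau$ the number of nodes of type $\tau$ in $T_n$ tends to $\nu(\psi_\tau)\in\NN^*$, which by assumption equals the number of such nodes in $M$; as there are only finitely many types $\tau$, the hypothesis of Theorem~\ref{thm-hanf} holds between $M$ and $T_n$ for all large $n$, so $M$ and $T_n$ satisfy the same $d'$-sentences. Since each $\langle\theta_j,T_n\rangle\in\{0,1\}$ converges, for $n$ large the truth assignment to $(\theta_1,\dots,\theta_q)$ in $T_n$ is constant, and $M$ realises the same assignment; substituting it into the Boolean combination leaves a single $t$-local formula $\Phi$ around $(x_1,\dots,x_k,c_1,\dots,c_m)$, and it suffices to prove $\langle\Phi,T_n\rangle\to\langle\Phi,M\rangle$.

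Whether $\Phi(v_1,\dots,v_k)$ holds depends only on the type, up to elementary equivalence of some fixed quantifier depth, of the union of the $t$-neighbourhoods of $v_1,\dots,v_k$ with all $v_i$ and all $c_a$ ($a\le m$) marked; there are only finitely many such types, so it is enough that the probability of each converges from $T_n$ to $M$. I would sort the types according to how the $v_i$ cluster together and whether each cluster lies within bounded distance of a constant. The crux is that any tuple having a cluster of at least two of the $v_i$ that is \emph{not} within bounded distance of a constant has total probability tending to zero, in $T_n$ and in $M$ alike. In $T_n$ this uses that the sequence is null-partitioned: for each $\varepsilon>0$ and all large $n$, every non-constant node has at most $\varepsilon|T_n|+2$ children (otherwise deleting it would leave at least $\varepsilon|T_n|$ vertices outside its two largest components, making it $\varepsilon$-major, hence a constant), and by Lemma~\ref{lm-major-neighborhood} applied to the set of $\varepsilon$-major nodes it has at most $(2^{r+1}+1)\varepsilon|T_n|$ nodes within distance $r$ along paths avoiding the constants, so the probability that two of the $v_i$ coincide, or are joined by a path of bounded length avoiding the constants, tends to $0$. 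In $M$ it uses the third hypothesis, which makes every $r$-neighbourhood in $M\setminus\{c_i:i\in\NN\}$ $\mu$-null, together with the fact that the Stone measure of $M$ being $\mu$ forces every singleton to be $\mu$-null --- the constants because $\langle x=c_i,M\rangle=\lim_n|T_n|^{-1}=0$, the other nodes by the third hypothesis with $r=1$. The tuples that survive are those in which every cluster is anchored at a constant $c_a$; there, using that the linear order underlying $\succ$ is not first order definable, the only non-negligible joint behaviour of several $v_i$ in one cluster is the one in which their positions among the children of $c_a$, in the ordering given by $\succ$, are far apart, and then the probability of the type factors, up to terms tending to $0$, as a product over $i$ of Stone pairings of formulas with the single free variable $x_i$ and the single constant $c_a$ ($x_i$ a child of $c_a$ of a given subtree type, $x_i$ a grandchild of $c_a$ through a child of a given type, and so on). Each such factor, being a local formula with one free variable, is equivalent to a member of $\FOlocal$, so by first order convergence of $(T_n)$ and the assumption that the Stone measure of $M$ is $\mu$ it converges to its value in $M$; summing the finitely many dominant types and bounding the tail over the anchoring constants by $\sum_a\langle\parnt(x,c_a),T_n\rangle\le 1$ yields $\langle\Phi,T_n\rangle\to\langle\Phi,M\rangle$, and hence $\langle\psi,T_n\rangle\to\langle\psi,M\rangle$.

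The step I expect to be the main obstacle is this last classification, in particular making ``up to terms tending to $0$'' rigorous when a cluster sits around a high-degree constant: one has to show that two or more of the $v_i$ sharing a constant parent behave, after discarding events of probability $O(|T_n|^{-1})$, as if their local types relative to that constant were stochastically independent, and one has to keep the estimates uniform in $n$ over the countably many constants that can serve as anchors. The remaining parts --- the manipulation of the Gaifman normal form and the appeals to Theorem~\ref{thm-hanf} and Lemma~\ref{lm-major-neighborhood} --- are routine.
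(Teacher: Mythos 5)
Your overall plan has the same skeleton as the paper's proof: control sentences through Theorem~\ref{thm-hanf} and the discrete Stone measure, and then reduce the probability that a random $k$-tuple satisfies $\psi$ to probabilities of \emph{individual} local types, using the null-partitioned hypothesis (via Lemma~\ref{lm-major-neighborhood}) on the $T_n$ side and the third hypothesis on the $M$ side to kill the contribution of tuples whose free-variable values interact away from the constants. Your use of ``a non-constant node has at most $\varepsilon|T_n|+2$ children, else it is $\varepsilon$-major'' is a correct supplementary observation. Where you diverge is in how you make the reduction to individual local types precise, and that is exactly where the paper's argument is crisper. You invoke Gaifman normal form and then try to show, by a cluster decomposition and an explicit ``factorization up to $o(1)$'' around high-degree constants, that the joint local type of several $v_i$ sharing an anchor is asymptotically a product of single-variable local types; you flag this as the main obstacle. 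The paper instead applies Lemma~\ref{lm-hanf} (its Ehrenfeucht--Fra\"\i ss\'e lemma with free variables, applied at $\ell=0$), which says the truth of $\psi(v_1,\ldots,v_k)$ is already determined by the $\beta$-Hintikka types of the $v_i$, their pairwise $10^d$-positions, and the global type statistics. The key extra observation, which replaces your factorization step, is that for ``generic'' tuples --- any two $v_i,v_j$ at distance more than $2\cdot 10^d$ in $T\setminus\{c_1,\ldots,c_{k_0}\}$ --- any canonical path of length $\le 10^d$ between them must pass through one of $c_1,\ldots,c_{k_0}$, so the $10^d$-position is already encoded in the $\beta$-Hintikka types (the constants are part of the signature). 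Thus no independence claim near high-degree anchors is ever needed: once the positions are a function of the types, the desired product structure comes for free from the independent sampling of the $k$-tuple, and the comparison between $T_n$ and $M$ reduces to $|\langle\varphi,T_n\rangle-\langle\varphi,M\rangle|\le\varepsilon$ for finitely many $\beta$-Hintikka formulas $\varphi$. A second point where the paper is more careful than your sketch is on the $M$ side: the third hypothesis only makes $r$-neighborhoods in $M\setminus\{c_i:i\in\NN\}$ null, while the relevant estimate is about $M\setminus\{c_1,\ldots,c_{k_0}\}$, which may still contain constants $c_i$ with $i>k_0$; the paper bounds the measure of the $4\cdot 10^d$-neighborhood of such $c_i$ by the corresponding Stone pairing (a local formula), transferring the $T_n$-side bound. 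Your appeal to ``singletons are null'' does not by itself give this, since those neighborhoods are not singletons.
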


\begin{proof}
By Theorem~\ref{thm-hanf},
the discrete Stone measure fully determines which first order sentences are satisfied by $M$,
in particular, $\langle\psi,M\rangle$ is equal to the limit of $\langle\psi,T_n\rangle$ for every first order sentence $\psi$.
The situation is trickier for first order formulas $\psi$ with free variables.

Fix such a formula $\psi$ with $k>0$ free variables and with quantifier depth $d$.
Let $\beta$ and $\gamma$ be the quantities from Lemma~\ref{lm-hanf} applied for $d$ and $\ell=0$, and
let $B$ be the number of $\beta$-Hintikka formulas.
Lemma~\ref{lm-hanf} yields that
the truth value of $\psi$ for a particular evaluation of its free variables is determined
by the $\beta$-Hintikka types of the values of the free variables and
by the statistics $\beta$-Hintikka types (possibly truncated at the value of $\gamma$) of all the nodes,
if none of the pairs of the values of the free variables is joined by a canonical path of length at most $10^d$.
If the distance between the values of the free variables is smaller, then their distance can also come into the play.
Since there exist first order convergent sequences of plane $c$-trees such that
the probability of two random nodes being joined by a canonical path of length at most $10^d$ is
bounded away from zero for every tree in the sequence,
we have to account for this possibility, and we do so using the major nodes we have introduced.

Fix $\varepsilon>0$.
Since the sequence $(T_n)_{n\in\NN}$ is null-partitioned,
there exists $n_0$ such that all the $\varepsilon$-major nodes of each of the trees $T_n$, $n\ge n_0$, are among the constants $c_1,\ldots,c_{k_0}$.
We assume that $k_0$ and $n_0$ are large enough such that
\begin{itemize}
\item the largest index $i$ of a constant $c_i$ that appears in $\psi$ does not exceed $k_0$,
\item all trees $(T_n)_{n\in\NN}$, $n\ge n_0$, contain the same set of the constants among $c_1,\ldots,c_{k_0}$ (the existence of such $n_0$
follows from the first order convergence of the sequence $(T_n)_{n\in\NN}$), and
\item $|\langle \varphi,T_n\rangle-\langle \varphi,M\rangle|\le\varepsilon$
for every $\beta$-Hintikka formula $\varphi$ and $n\ge n_0$ (the existence of such $n_0$
follows from the fact that are only $B$ different $\beta$-Hintikka formulas and the sequence $(T_n)_{n\in\NN}$ is first order convergent).
\end{itemize}
\noindent
The probability that one of the $k$ randomly chosen nodes of $T_n$, $n\ge n_0$, is among the constants $c_1,\ldots,c_{k_0}$ or
two such nodes are at distance at most $2\cdot 10^d$ (in $T_n\setminus\{c_1,\ldots,c_{k_0}\}$)
is at most $kk_0|T_n|^{-1}+(2^{2\cdot 10^d}+1)k^2\varepsilon$,
since every $2\cdot 10^d$-neighborhood of a node in the tree $T_n\setminus\{c_1,\ldots,c_{k_0}\}$
has at most $(2^{2\cdot 10^d+1}+1)\varepsilon|T_n|$ nodes by Lemma~\ref{lm-major-neighborhood}.

Let us now look at the modeling $M$.
The $2\cdot 10^d$-neighborhood of any node in $M\setminus\{c_i,i\in\NN\}$ has zero measure
but the same need not be true for the $2\cdot 10^d$-neighborhoods in $M\setminus\{c_1,\ldots,c_{k_0}\}$.
However,
since it is possible to express by a local first order formula that
a node is at distance at most $4\cdot 10^{d}$ from a constant $c_i$ and
the $4\cdot 10^{d}$-neighborhood of each node $c_i$, $i>k_0$, contains at most $(2^{4\cdot 10^{d}+1}+1)\varepsilon|T_n|$ nodes
in $T_n\setminus\{c_1,\ldots,c_{k_0}\}$, $n\ge n_0$, by Lemma~\ref{lm-major-neighborhood},
the measure of the $4\cdot 10^{d}$-neighborhood of $c_i$, $i>k_0$, is at most $(2^{4\cdot 10^{d}+1}+1)\varepsilon$.
Let $u$ be a node of $M\setminus\{c_1,\ldots,c_{k_0}\}$.
If the $2\cdot 10^d$-neighborhood of $u$ in $M\setminus\{c_1,\ldots,c_{k_0}\}$ does not contain any of the constants $c_i$, $i>k_0$,
then its measure is zero by the assumption of the lemma.
If the $2\cdot 10^d$-neighborhood of $u$ in $M\setminus\{c_1,\ldots,c_{k_0}\}$ contains one of the constants $c_i$, $i>k_0$,
it is contained in the $4\cdot 10^{d}$-neighborhood of $c_i$ in $M\setminus\{c_1,\ldots,c_{k_0}\}$ and
its measure is at most $(2^{4\cdot 10^{d}+1}+1)\varepsilon$,
which is the upper bound on the measure of the $4\cdot 10^{d}$-neighborhood of $c_i$.
We conclude that the probability that any two of $k$ randomly chosen nodes of $M\setminus\{c_1,\ldots,c_{k_0}\}$
are at distance at most $2\cdot 10^d$ in $M\setminus\{c_1,\ldots,c_{k_0}\}$ is at most $(2^{4\cdot 10^{d}+1}+1)k^2\varepsilon$.

For any $k$-tuple of $\beta$-Hintikka formulas $(\varphi_1,\ldots,\varphi_k)$,
the probabilities that a random $k$-tuple of nodes of $T_n$, $n\ge n_0$, and
a random $k$-tuple of nodes of $M$ have $\beta$-Hintikka types containing $\varphi_1,\ldots,\varphi_k$ (in this order)
differ by at most $k\varepsilon$
since $|\langle \varphi,T_n\rangle-\langle \varphi,M\rangle|\le\varepsilon$ for every $\beta$-Hintikka formula $\varphi$.
Note that there are $B^k$ choices of $k$-tuples of $\beta$-Hintikka formulas.
Also note that if two nodes are at distance larger than $2\cdot 10^d$ in $T\setminus\{c_1,\ldots,c_{k_0}\}$ and
have the same $\beta_d$-Hintikka type,
then either they are not joined by a canonical path of length at most $10^d$ or
their $10^d$-position is uniquely determined by their $\beta_d$-Hintikka types (since a canonical path of length at most $10^d$
must pass through one of the constants $c_1,\ldots,c_{k_0}$).
The same holds for any two nodes of $M\setminus\{c_1,\ldots,c_{k_0}\}$.
Hence, the Stone pairings $\langle \psi,T_n\rangle$ and $\langle\psi,M\rangle$ for $n\ge n_0$,
which are the probabilities that a random $k$-tuple of nodes of $T_n$, $n\ge n_0$, and
a random $k$-tuple of nodes of $M$ satisfy $\psi$, differ by at most
$$B^k k\varepsilon+\frac{kk_0}{|T_n|}+(2^{2\cdot 10^{d}+1}+1)k^2\varepsilon+(2^{4\cdot 10^{d}+1}+1)k^2\varepsilon\;\mbox{.}$$
Since the choice of $\varepsilon$ was arbitrary and the orders $|T_n|$ of the trees $T_n$ tend to infinity,
we conclude that for every $\varepsilon_0>0$, there exists $n_0$ such that
$|\langle \psi,T_n\rangle-\langle\psi,M\rangle|\le\varepsilon_0$ for every $n\ge n_0$.
Since this holds for every first order formula $\psi$, the modeling $M$ is a limit modeling of $(T_n)_{n\in\NN}$.
\end{proof}

\subsection{Composition}
\label{sect-compose}

In this section, we complement Lemma~\ref{lm-modeling} by constructing a modeling with the properties stated in that lemma.

\begin{lemma}
\label{lm-composition}
Let $(T_n)_{n\in\NN}$ be a first order convergent null-partitioned sequence of plane $c$-trees and
let $\nu$ and $\mu$ be its discrete Stone measure and Stone measure, respectively.
There exists a plane $c$-tree modeling such that
\begin{itemize}
\item the discrete Stone measure of $M$ is $\nu$,
\item the Stone measure of $M$ is $\mu$,
\item the $r$-neighborhood of each node in $M\setminus\{c_i,i\in\NN\}$ has zero measure for every $r\in\NN$, and
\item the modeling $M$ satisfies the strong finitary mass transport principle.
\end{itemize}
\end{lemma}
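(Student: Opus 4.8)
The plan is to build the modeling $M$ directly from the Stone measure viewed as a measure on the Polish space of Hintikka chains (as set up just before the lemma), following the strategy of Lemma~39 of~\cite{bib-pom-trees} but adapted to plane $c$-trees. The ground set of $M$ will be a disjoint union of two parts. The \emph{finite part} consists of a single node for each $\psi\in\FOlocal$ with $\nu(\psi)<\infty$; these will carry point masses chosen so that the discrete Stone measure is exactly $\nu$ (the constants $c_i$ live here, together with the other ``rare'' Hintikka types that are present only boundedly often). The \emph{continuous part} is carried by the set of Hintikka chains $(\psi_i)_{i\in\NN}$ for which every $\psi_i$ has $\nu(\psi_i)=\infty$; we equip it with (the restriction of) the measure $\mu$ constructed via Carath\'eodory's theorem, suitably renormalized together with the point masses so that the total measure is $1$. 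The standard Borel structure is the one generated by the basic clopen sets; measurability of first-order-definable sets of $k$-tuples will be inherited from the fact that, by Theorem~\ref{thm-hanf} and Lemma~\ref{lm-hanf}, the truth of any formula on a $k$-tuple depends only on the $\beta$-Hintikka types of the coordinates, their pairwise $10^d$-positions, and the (truncated) global type statistics — all of which are Borel.

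First I would fix the edge set. Between two ``chain'' nodes $x=(\psi_i)$ and $y=(\chi_i)$ we must decide adjacency so that $\parnt$ and $\succ$ behave like a plane tree and so that each chain node genuinely realizes its Hintikka type. The idea is to realize the infinite tree of Hintikka formulas (the ``graph-theoretic view'' described in Section~\ref{sect-Hintikka}) as an actual plane tree on the chain space: a chain $x$ records, level by level, the local $d$-type of the node, and the $\parnt$/$\succ$ relations are read off by a fixed consistent rule from the finite prefixes of the chains (this is where ``plane'' matters — we must also encode the left/right sibling order, which the local types of a node and its parent determine up to the finitely much information visible at bounded depth). One then has to check, by playing the Ehrenfeucht--Fra\"iss\'e game of Lemma~\ref{lm-hanf}, that each chain node of $M$ satisfies exactly the formulas in its chain; attachments to the finite part handle the boundedly-occurring types and the constants. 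The third bullet, that $r$-neighborhoods in $M\setminus\{c_i\}$ have measure zero, follows because the continuous part uses only Hintikka types occurring infinitely often, so any fixed finite neighborhood pattern, being a basic set defined by finitely many clopen conditions, either is empty or corresponds to a tail event of measure zero under $\mu$ — more precisely, a node with a $\mu$-positive $r$-neighborhood would force a ``major''-type concentration contradicting null-partitionedness of $(T_n)$.

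The two remaining bullets are where the real work lies, and I expect the Stone-pairing identity $\langle\psi,M\rangle=\mu(\psi)$ to be the main obstacle. For sentences, Theorem~\ref{thm-hanf} already gives that $M$ satisfies the correct $d$-sentences once its discrete Stone measure is $\nu$. For formulas with free variables one argues exactly as in the proof of Lemma~\ref{lm-modeling}: condition on the event that no two of the $k$ chosen nodes are within a canonical path of length $10^d$ and none is near a constant; on the complement, the probability is controlled by the measures of bounded neighborhoods of the constants, which are small because those constants capture the $\varepsilon$-major nodes. The delicate point is that $M$ was \emph{built} from $\mu$, so one must verify that the push-forward of the product measure under ``take $\beta$-Hintikka types and $10^d$-position'' matches the limit of the corresponding quantities for $T_n$; this is precisely where the additivity of $\mu_\RR$ and its extension to a genuine measure are used, and it requires checking that the finitely many ``close-pair'' configurations that survive all pass through a constant and are therefore accounted for by point masses. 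Finally, the strong finitary mass transport principle is obtained almost for free from the construction: since $M\setminus\{c_i\}$ is (measure-theoretically) a limit of finite plane trees with all neighborhoods of measure zero, any measurable $A,B$ with the required in/out-degree bounds can be partitioned along $\beta$-Hintikka types and handled type-by-type, each type-class being approximated by a genuine finite tree for which $a\mu(A)\le b\mu(B)$ holds trivially; the point masses in the finite part satisfy the inequality because they come from genuine finite graphs. I would therefore structure the write-up as: (1) define the ground set and measure; (2) define the edge relations and verify it is a plane $c$-tree modeling with discrete Stone measure $\nu$; (3) prove the zero-measure-neighborhood property; (4) prove $\langle\psi,M\rangle=\mu(\psi)$ using the $\varepsilon$-major-node argument; (5) deduce the strong finitary mass transport principle.
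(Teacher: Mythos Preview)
Your plan has a real gap in the definition of the continuous part. You propose to take the ground set of the continuous part to be the set of Hintikka chains $\Psi$ with $\nu(\Psi)=\infty$, equipped with the measure $\mu$. But then each Hintikka chain corresponds to a \emph{single} node of $M$. Now suppose a chain $\Psi'$ describes a node that must have $m\ge 2$ children whose Hintikka chain is $\Psi$ (or infinitely many such children). In your model there is exactly one node with chain $\Psi$ and one with chain $\Psi'$, so the node with chain $\Psi'$ can have at most one child of type $\Psi$; it therefore fails to satisfy its own Hintikka type, and the discrete Stone measure of $M$ is not $\nu$. The vague phrase ``read off by a fixed consistent rule from the finite prefixes of the chains'' cannot fix this: the Hintikka chain of a node determines the Hintikka chain of its parent, but not which of the many possible parents it is, and in your ground set there is no ``many''. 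The paper resolves this by taking the continuous part to be $\nu^{-1}(\infty)\times[0,1)^3$ and using the extra three real coordinates to (i) distinguish siblings with the same chain, (ii) define the parent and successor maps via explicit invertible measure-preserving transformations (an irrational rotation $x\mapsto x+\sqrt{2}\bmod 1$ and a bit-interleaving bijection $\zeta:[0,1)\to[0,1)^2$), and (iii) guarantee that any $r$-neighbourhood lies in a finite union of hyperplanes $\{h=h_0\pm i\sqrt{2}\bmod 1\}$ and hence has measure zero. This extra factor is the missing idea.

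A second, related gap is your argument for the strong finitary mass transport principle. The principle concerns \emph{arbitrary} measurable $A,B$, not first-order definable ones, so ``partition along $\beta$-Hintikka types and approximate each type-class by a finite tree'' does not suffice: a measurable set need not respect any finite type partition, and there is no approximation-by-finite-graphs statement available at this level of generality. In the paper the principle follows precisely because the parent and successor relations are built from invertible measure-preserving maps on $[0,1)^3$; without that explicit structure there is nothing to leverage. (A smaller issue: your finite part is also off. You want $\nu(\Psi)$ distinct nodes for each chain $\Psi$ with $\nu(\Psi)<\infty$, not a single node per formula, and they should carry measure zero, not point masses; the constants are among these nodes.)
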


\begin{proof}
We first extend the mapping $\nu$ to Hintikka chains as follows.
If $\Psi=(\psi_i)_{i\in\NN}$ is a Hintikka chain, then $$\nu(\Psi)=\lim_{i\to\infty}\nu(\psi_i)\;\mbox{.}$$
Observe that the support of the measure $\mu$ is a subset of $\nu^{-1}(\infty)$.
Indeed, if $\nu(\Psi)=k\in\NN$, then there exists a $d$-Hintikka chain $\psi_d$ such that $\nu(\psi_d)=k$.
Hence, there exists $n_0$ such that every $T_n$, $n\ge n_0$ contains exactly $k$ nodes $u$ such that $T_n\models\psi_d(u)$.
Since the sequence $(T_n)_{n\in\NN}$ is null-partitioned (in particular,
the orders of $(T_n)_{n\in\NN}$ tend to infinity), it follows that $\mu(\psi_d)=0$.
Consequently, the measure of the basic set of Hintikka chains corresponding to $\psi_d$ is zero and
$\Psi$ is not in the support of $\mu$.

The node set of the modeling $M$ that we construct consists of two sets:
$V_f$ contains all pairs $(\Psi,i)$ where $\Psi$ is a Hintikka chain such that $\nu(\Psi)\in\NN$ and $i\in [\nu(\Psi)]$, and
$V_{\infty}=\nu^{-1}(\infty)\times [0,1)^3$.
Note that the set $V_f$ is countable.

A Hintikka chain encodes many properties of a node. In particular, it uniquely determine the Hintikka chains of
the parent and the successor (if they exist) and we will refer to these chains as the {\em parent Hintikka chain} and
the {\em successor Hintikka chain}.
The Hintikka chain of a node also determines whether the node is one of the constants,
whether it is the root, and
how many children satisfying a particular local first order formula the node can have.
In a slightly informal way,
we will be speaking about these properties by saying that the node of the Hintikka chain is a constant,
it is the root, etc.

We now continue with the construction of the modeling $M$ with setting the constants.
For each constant $c_i$,
there is at most one Hintikka chain $\Psi$ with $\nu(\Psi)>0$ such that the node of $\Psi$ is the constant $c_i$.
If such a Hintikka chain $\Psi$ exists, then $\nu(\Psi)=1$ and we set the constant $c_i$ to be the node $(\Psi,1)\in V_f$.

We next define the child-parent relation $\parnt$ and the successor relation $\succ$.
Let $(\Psi,i)\in V_f$.
If the node of the Hintikka chain $\Psi$ is the root, then $(\Psi,i)$ has no parent and no successor.
Otherwise, let $\Psi'$ be the parent Hintikka chain of $\Psi$.
The definition of the discrete Stone measure and the first order convergence of $(T_n)_{n\in\NN}$ imply that
$\nu(\Psi')$ is a non-zero integer which divides $\nu(\Psi)$.
The parent of the node $(\Psi,i)$ is the node $(\Psi',i')$ where $i'=\lceil i\nu(\Psi')/\nu(\Psi) \rceil$.
If the node of the Hintikka chain $\Psi$ has no successor, then $(\Psi,i)$ has no successor.
Otherwise, let $\Psi''$ be the successor Hintikka chain of $\Psi$.
Since $\nu$ is a discrete Stone measure of a first order convergent sequence, it must hold that $\nu(\Psi)=\nu(\Psi'')$.
We set the successor of $(\Psi,i)$ to be the node $(\Psi'',i)$.

Let $(\Psi,h,s,t)\in V_{\infty}$.
Since every tree has a unique root and $\nu(\Psi)=\infty$, the node of the Hintikka chain $\Psi$ is not the root.
Let $\Psi'$ be the parent Hintikka chain.
If $\nu(\Psi')$ is a non-zero integer, the parent of $(\Psi,h,s,t)$ is the node $(\Psi',\lfloor t\nu(\Psi')\rfloor+1)$.
If the node of the Hintikka chain $\Psi$ has no successor, then $(\Psi,h,s,t)$ has no successor.
Otherwise, let $\Psi''$ be the successor Hintikka chain.
Since $\nu$ is a discrete Stone measure of a first order convergent sequence, it must hold that $\nu(\Psi)=\nu(\Psi'')$, and
we can set the successor of the node $(\Psi,h,s,t)$ to be the node $(\Psi'',h,s+\sqrt{2}\mod 1,t)$.

It remains to consider the case
that $\nu(\Psi')$ is equal to $\infty$ (it cannot be equal to zero since $\nu$ is a discrete Stone measure of a first order convergent sequence).
If the node of the Hintikka chain $\Psi'$ has only finitely many, say $m$, children with the Hintikka chain $\Psi$,
the parent of $(\Psi,h,s,t)$ is the node $(\Psi',h+\sqrt{2}\mod 1,ms\mod 1,t)$.
This type of child-parent edges will be referred to as {\em finitary arcs} in the proof of Theorem~\ref{thm-pw}.
We also have to define the successor of $(\Psi,h,s,t)$.
If the node of $\Psi$ has no successor, then $(\Psi,h,s,t)$ has no successor.
Otherwise, let $\Psi''$ be the successor Hintikka chain.
Since $\nu$ is a discrete Stone measure of a first order convergent sequence, it must hold that $\nu(\Psi)=\nu(\Psi'')$.
We set the successor of the node $(\Psi,h,s,t)$ to be the node $(\Psi'',h,s,t)$ (note that $\Psi\not=\Psi''$ since
the node of the Hintikka chain $\Psi'$ has exactly $m$ children with the Hintikka chain $\Psi$).

Finally, if the node of the Hintikka chain $\Psi'$ can have an infinite number of children with the Hintikka chain $\Psi$,
the parent of $(\Psi,h,s,t)$ is the node $(\Psi',h+\sqrt{2}\mod 1,\zeta_1(t),\zeta_2(t))$
where the functions $\zeta_1,\zeta_2:[0,1)\to [0,1)$ are defined as follows.
For $x\in[0,1)$, there exist unique numbers $x_i\in\{0,1\}$, $i\in\NN$ such that $\{i:x_i=0\}$ is infinite and
$x=\sum_{i=1}^{\infty} x_i2^{-i}$. The values $\zeta_1(x)$ and $\zeta_2(x)$ are defined as follows.
$$\zeta_1(x)=\sum_{i=1}^{\infty} x_{2i-1}2^{-i} \mbox{ and } \zeta_2(x)=\sum_{i=1}^{\infty} x_{2i}2^{-i} \;\mbox{.}$$
Note that the mapping $\zeta=(\zeta_1,\zeta_2)$ is an invertible measure preserving transformation from $[0,1)$ to $[0,1)^2$.
The just defined child-parent edges will be referred to as {\em infinitary arcs} in the proof of Theorem~\ref{thm-pw}.
If the node of $\Psi$ has no successor, then $(\Psi,h,s,t)$ has no successor.
Otherwise, let $\Psi''$ be the successor Hintikka chain.
Since $\nu$ is a discrete Stone measure of a first order convergent sequence, it must hold that $\nu(\Psi)=\nu(\Psi'')$, and
we set the successor of the node $(\Psi,h,s,t)$ to be the node $(\Psi'',h,s+\sqrt{2}\mod 1,t)$.

We next define a probability measure on $V_f\cup V_{\infty}$ as follows.
Every subset of $V_f$ is measurable and its measure is zero.
The set $V_{\infty}$ is equipped with the product measure determined by $\mu$ and the uniform (Borel) measure on $[0,1)^3$.
We need to verify that the constructed plane $c$-tree $M$ with this probability measure is a modeling and
that $M$ satisfies the properties given in the lemma.
The construction of $M$ implies that,
for every Hintikka chain $\Psi=(\psi_d)_{d\in\NN}$,
the nodes $u$ of $M$ such that the $d$-th Hintikka type of $u$ is $\psi_d$ for all $d\in\NN$
are exactly the nodes $(\Psi,i)\in V_f$ or the nodes $(\Psi,h,s,t)\in V_\infty$.
This yields that the discrete Stone measure and the Stone measure of $M$ are $\nu$ and $\mu$, respectively.

Since the mappings $f(x):=x+\sqrt{2}\mod 1$ and $\zeta$ are invertible measure preserving transformations and
$\mu$ is the Stone measure of a first order-convergent sequence of plane $c$-trees, 
one can argue using the way that $M$ was constructed that
all first order definable subsets of $M^k$, $k\in\NN$, are measurable in the product measure and
the modeling $M$ satisfies the strong finitary mass transport principle.
This argument follows the lines of the analogous argument in the proof of Lemma 39 in~\cite{bib-pom-trees}, and
so we briefly sketch the main steps of the argument.
First, every subset of $M$ defined by a local first order formula $\psi$ is open (and so it is measurable)
since it is the set of all nodes with the Hintikka chain in the first coordinate consistent with $\psi$.
Next, the subset of $M^2$ given by the child-parent relation and the subset of $M^2$ given by the successor relation are measurable.
Lemma~\ref{lm-hanf} yields that whether a $k$-tuple of nodes of $M$ satisfies a particular formula with $k$ variables
depends on the types of the nodes and the configuration formed by them,
which can be described by a finite encoding using the child-parent relation and the successor relation in a measurable way.
It follows that every first order-definable subset of $M^k$ is measurable, i.e., $M$ is a modeling.
Finally, the modeling $M$ satisfies the strong finitary mass transport principle
since the mappings $f(x):=x+\sqrt{2}\mod 1$ and $\zeta$ are invertible measure preserving transformations, and
$\mu$ is the Stone measure of a first order-convergent sequence of plane $c$-trees (this is important
in the analysis of the finitary and infinitary child-parent edges).

It now remains to verify the third property from the statement of the theorem.
Suppose that there exist a node $(\Psi,i)\in V_f$ and $r\in\NN$ such that
the $r$-neighborhood of $(\Psi,i)$ in $M\setminus\{c_i,i\in\NN\}$ has a positive measure, say $\varepsilon>0$.
Let $\Psi=(\psi_j)_{j\in\NN}$.
Since $\nu(\Psi)$ is finite, there exists $\psi_j$ such that $\nu(\psi_j)=m$ and $m\in\NN$.
By the definition of a null-partitioned sequence of trees,
there exist integers $n_0$ and $k_0$ such that
the $r$-neighborhood of each node in $T_n\setminus\{c_1,\ldots,c_{k_0}\}$, $n\ge n_0$, contains at most $\varepsilon |T_n|/2m$ nodes.
In particular, this holds for the $m$ nodes satisfying $\psi_j$.
This implies that the Stone measure of nodes joined to one of the $m$ nodes satisfying $\psi_j$
by a path of length at most $r$ that avoids $c_1,\ldots,c_{k_0}$ does not exceed $\varepsilon/2$ (note that this
property is first order expressible and therefore captured by the Stone measure).
Hence, the $r$-neighborhood of $(\Psi,i)$ in $M\setminus\{c_i,i\in\NN\}$ can have measure at most $\varepsilon/2$,
contrary to our assumption that its measure is $\varepsilon$.

Next, suppose that there exist a node $(\Psi,h,s,t)\in V_\infty$ and an integer $r$ such that
the $r$-neighborhood of $(\Psi,h,s,t)$ in $M\setminus\{c_i,i\in\NN\}$ has a positive measure.
The $r$-neighborhood of $(\Psi,h,s,t)$ contains only nodes of $V_\infty$ (otherwise,
$V_f$ would contain a node  whose $2r$-neighborhood has a positive measure).
However, the definition of the modeling $M$ implies that the second coordinate of the nodes
in the $r$-neighborhood of $(\Psi,h,s,t)$ are $h\pm\sqrt{2}i\mod 1$ for $i=-r,\ldots,+r$.
Since the set of all nodes of $V_{\infty}$ with the second coordinate equal to $h\pm\sqrt{2}i\mod 1$, $i=-r,\ldots,+r$,
has measure zero, the $r$-neighborhood of $(\Psi,h,s,t)$ in $M\setminus\{c_i,i\in\NN\}$ also has measure zero.
\end{proof}

Since Theorem~\ref{thm-plane} is easy to prove in the case
when the orders of the plane trees in a first order convergent sequence $(T_n)_{n\in\NN}$ do not tend to infinity (in such case,
there exists $n_0$ such that all trees $T_n$, $n\ge n_0$, are isomorphic),
Lemmas~\ref{lm-c-trees}, \ref{lm-modeling} and \ref{lm-composition} yield Theorem~\ref{thm-plane}.

\section{Interpretation schemes}
\label{sect-scheme}

Interpretation schemes can be used to translate results on the existence of modelings from one class of discrete structures to another.
We now recall some of the results from~\cite[Sections 6--8]{bib-pom-unified}, which we formulate in the setting that we need in our exposition.
Let $\kappa$ and $\lambda$ be two signatures where $\lambda$ has $k$ relational symbols $R_1,\ldots,R_k$ with arities $r_1,\ldots,r_k$.
A (first order) {\em interpretation scheme} $I$ of $\lambda$-structures in $\kappa$-structures
consists of a first order $\kappa$-formula $\Phi_0$ with one free variable and
first order $\kappa$-formulas $\Phi_i$, each with $r_i$ free variables.
If $K$ is a $\kappa$-structure, then $I(K)$ is the $\lambda$-structure with the domain $\{x\in K:\Phi_0(x)\}$ and
$R_i=\{(x_1,\ldots,x_{r_i})\in K^{r_i}:\Phi_i(x_1,\ldots,x_{r_i})\}$.
We say that the relational symbol $R_i$ is {\em interpreted trivially} by a relational symbol $Q$ (from the signature $\kappa$)
if $\Phi_i(x_1,\ldots,x_{r_i})=Q(x_1,\ldots,x_{r_i})$.

The following lemma, which links modelings of $\kappa$-structures and $\lambda$-structures,
summarizes the results of~\cite[Sections 6--8]{bib-pom-unified}.

\begin{lemma}
\label{lm-scheme}
Let $\kappa$ and $\lambda$ be two signatures and $I=\{\Phi_0,\Phi_1,\ldots\}$ an interpretation scheme of $\lambda$-structures in $\kappa$-structures.
Suppose that $(L_n)_{n\in\NN}$ is a first order convergent sequence of $\lambda$-structures and
that there exists a first order convergent sequence $(K_n)_{n\in\NN}$ of $\kappa$-structures such that $I(K_n)=L_n$.
If the sequence $(K_n)_{n\in\NN}$ has a limit modeling $M$ with a measure $\mu$ and $\langle\Phi_0,M\rangle$ is positive,
then the $\lambda$-structure $I(M)$ together
with the measure $\mu'$ defined on the measurable subsets $Z$ of $\{x\in M:\Phi_0(x)\}$ as $\mu'(Z)=\mu(Z)/\langle\Phi_0,M\rangle$
is a limit modeling for the sequence $(L_n)_{n\in\NN}$.

Moreover, if $\kappa$ and $\lambda$ are extensions of the signature of graphs,
the edge relational symbol from $\lambda$ is interpreted trivially by the edge relational symbol from $\kappa$, and
the modeling $M$ satisfies the strong finitary mass transport principle,
then the modeling $I(M)$ also satisfies the strong finitary mass transport principle.
\end{lemma}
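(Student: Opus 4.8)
The plan is to route everything through the fundamental theorem of interpretations and then do the bookkeeping on Stone pairings. First I would record the pull-back of formulas through $I$: for every first order $\lambda$-formula $\psi$ with free variables $x_1,\ldots,x_k$ there is a first order $\kappa$-formula $\widehat\psi$ with the same free variables such that, for every $\kappa$-structure $K$ and every tuple $(x_1,\ldots,x_k)$ of elements of $K$ satisfying $\Phi_0(x_i)$ for all $i$, we have $I(K)\models\psi(x_1,\ldots,x_k)$ if and only if $K\models\widehat\psi(x_1,\ldots,x_k)$. Here $\widehat\psi$ is built by the usual induction on the structure of $\psi$: each atom $R_i(\bar x)$ is replaced by $\Phi_i(\bar x)$, Boolean connectives are kept, and each quantifier is relativised to $\Phi_0$. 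This is exactly the content of the relevant parts of~\cite[Sections~6--8]{bib-pom-unified}, which I would cite rather than reprove. I would also note, for later use, that if the edge relational symbol of $\lambda$ is interpreted trivially by that of $\kappa$, then adjacency in $I(K)$ between two elements of $\{x:\Phi_0(x)\}$ is the same as adjacency in $K$.

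Next I would translate Stone pairings. After discarding finitely many initial terms (which affects neither first order convergence nor the limit of any Stone pairing) I may assume $\langle\Phi_0,K_n\rangle>0$ for all $n$, so each $L_n=I(K_n)$ is nonempty. For a finite $\kappa$-structure $K$ with $\langle\Phi_0,K\rangle>0$, a uniformly random $k$-tuple of vertices of $I(K)$ is the same as a uniformly random $k$-tuple of vertices of $K$ conditioned on all $k$ (independent) coordinates lying in $\Phi_0(K)$, an event of probability $\langle\Phi_0,K\rangle^k$; together with the pull-back this yields
\[
\langle\psi,I(K)\rangle=\frac{\bigl\langle\,\Phi_0(x_1)\wedge\cdots\wedge\Phi_0(x_k)\wedge\widehat\psi\,,\,K\,\bigr\rangle}{\langle\Phi_0,K\rangle^{k}}\,.
\]
On the modeling side, $\{x\in M:\Phi_0(x)\}$ is a Borel, hence standard Borel, subset of $M$ of positive $\mu$-measure $\langle\Phi_0,M\rangle$, so $\mu'$ is a probability measure; and by the pull-back the set of $k$-tuples of $I(M)$ satisfying $\psi$ is the intersection of $\{x:\Phi_0(x)\}^k$ with the ($M$-)measurable set of $k$-tuples of $M$ satisfying $\widehat\psi$, hence it is measurable in $I(M)^k$. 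Thus $I(M)$ is a modeling, its $k$-fold product measure is the restriction of that of $M$ rescaled by $\langle\Phi_0,M\rangle^{-k}$, and the same computation gives
\[
\langle\psi,I(M)\rangle=\frac{\bigl\langle\,\Phi_0(x_1)\wedge\cdots\wedge\Phi_0(x_k)\wedge\widehat\psi\,,\,M\,\bigr\rangle}{\langle\Phi_0,M\rangle^{k}}\,.
\]

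Then I would pass to the limit. Since $(K_n)_{n\in\NN}$ is first order convergent with limit modeling $M$, the numerator in the formula for $K_n$ tends to the numerator in the formula for $M$, while $\langle\Phi_0,K_n\rangle^k\to\langle\Phi_0,M\rangle^k>0$; hence $\langle\psi,I(K_n)\rangle\to\langle\psi,I(M)\rangle$. As $I(K_n)=L_n$, this says $\langle\psi,L_n\rangle\to\langle\psi,I(M)\rangle$ for every $\lambda$-formula $\psi$, i.e.\ $I(M)$ equipped with $\mu'$ is a limit modeling of $(L_n)_{n\in\NN}$.

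Finally, for the strong finitary mass transport principle, assume that $\kappa$ and $\lambda$ extend the signature of graphs, that the edge symbol of $\lambda$ is interpreted trivially, and that $M$ satisfies the principle. Let $A,B$ be measurable subsets of the vertex set $\{x:\Phi_0(x)\}$ of $I(M)$ such that each vertex of $A$ has at least $a$ neighbors in $B$ and each vertex of $B$ has at most $b$ neighbors in $A$, adjacency taken in $I(M)$. By the remark at the end of the first paragraph these adjacencies coincide with those in $M$, so $A$ and $B$, regarded as measurable subsets of the vertices of $M$, meet the hypotheses of the principle for $M$; this gives $a\mu(A)\le b\mu(B)$, and dividing by $\langle\Phi_0,M\rangle>0$ gives $a\mu'(A)\le b\mu'(B)$, as wanted. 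I do not expect a genuine obstacle anywhere; the only point that needs care is the conditional-sampling normalization in the middle paragraph — the factor $\langle\Phi_0,\cdot\rangle^k$ and the use of independence of the $k$ coordinates — together with quoting the interpretation lemma accurately from~\cite[Sections~6--8]{bib-pom-unified}.
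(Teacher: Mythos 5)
The paper does not actually prove Lemma~\ref{lm-scheme}; it states it as a summary of results cited from~\cite[Sections~6--8]{bib-pom-unified}, so there is no in-paper argument to compare against. Your proof is a correct self-contained reconstruction of the standard interpretation-scheme argument: the pull-back $\psi\mapsto\widehat\psi$ by replacing atoms $R_i$ with $\Phi_i$ and relativising quantifiers to $\Phi_0$, the identity
\[
\langle\psi,I(K)\rangle=\frac{\bigl\langle\Phi_0(x_1)\wedge\cdots\wedge\Phi_0(x_k)\wedge\widehat\psi,\,K\bigr\rangle}{\langle\Phi_0,K\rangle^{k}},
\]
valid for finite structures and, by construction of $\mu'$, for $(I(M),\mu')$ as well, and then taking limits using positivity of $\langle\Phi_0,M\rangle$. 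The measurability of definable $k$-tuple sets in $I(M)$ follows exactly as you say, since the Borel $\sigma$-algebra on $\{x:\Phi_0(x)\}$ is the trace of that of $M$, and the definable set is $\{\Phi_0\}^k$ intersected with a definable (hence measurable) subset of $M^k$. The mass-transport argument is also correct: with the edge symbol interpreted trivially, adjacency in $I(M)$ between elements of $\Phi_0(M)$ agrees with adjacency in $M$, so any pair $(A,B)$ witnessing the hypotheses in $I(M)$ does so in $M$, and the conclusion $a\mu(A)\le b\mu(B)$ rescales by $\langle\Phi_0,M\rangle^{-1}$ to give $a\mu'(A)\le b\mu'(B)$. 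One minor remark: it is worth stating explicitly why one may restrict to the tail with $\langle\Phi_0,K_n\rangle>0$, namely that $\langle\Phi_0,K_n\rangle\to\langle\Phi_0,M\rangle>0$ forces this for all large $n$; otherwise $I(K_n)=L_n$ would have empty domain. But you do say this in passing, and it is fine.
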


As a simple application of the machinery that we have introduced,
we extend results of Section~\ref{sect-plane} to plane forests with nodes colored with finitely many colors.
A $k$-colored plane forest is a graph where each component is a plane tree and each node  is assigned one of the $k$ colors.
The signature we use to describe $k$-colored plane forests contains the two binary relational symbols $\parnt$ and $\succ$ and
$k$ unary relational symbols used to described the colors of the nodes.

We introduce a constructive mapping $t$ that maps a $k$-colored plane forest $F$ to a plane tree.
The plane tree $t(F)$ is constructed as follows:
first introduce a new node $r$ and make the root of each tree in $F$ a child of $r$.
The order of the children of $r$ is set arbitrarily.
Next, each node of $F$ colored with $i\in [k]$ gets $i$ new children
which appear first in the linear order of its children.

We define an interpretation scheme $I$ of $k$-colored plane forests in plane trees such that
$I(t(F))=F$ for every $k$-colored plane forest $F$.
The formula $\Phi_0(x)$ is satisfied if $x$ is neither the root nor a leaf (note that each node of $F$ has been added at least one child).
The parent and the successor relation are simply interpreted by the formulas $\Phi_1(x,y)=\parnt(x,y)$ and $\Phi_2(x,y)=\succ(x,y)$.
Finally, the formula $\Phi_{i+2}(x)$ determining whether the node $x$ has the color $i$
expresses whether the $i$-th child of $x$ exists and if so, whether it is a leaf.
It is straightforward to check that $I(t(F))=F$ for every $k$-colored plane forest $F$ as desired.

Theorem~\ref{thm-plane} and Lemma~\ref{lm-scheme} yield the following.

\begin{theorem}
\label{thm-omega}
Every first-order convergent sequence $k$-colored plane forests has a limit modeling
satisfying the strong finitary mass transport principle.
\end{theorem}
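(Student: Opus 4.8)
The plan is to deduce Theorem~\ref{thm-omega} from Theorem~\ref{thm-plane} by pushing it through the interpretation scheme $I$ and the encoding $t$ defined above, using Lemma~\ref{lm-scheme}. Fix a first order convergent sequence $(F_n)_{n\in\NN}$ of (non-empty) $k$-colored plane forests and consider the sequence $(t(F_n))_{n\in\NN}$ of plane trees. Since the definition of $t$ fixes the order of the children of the new root arbitrarily, the sequence $(t(F_n))_{n\in\NN}$ need not itself be first order convergent; I would therefore first pass to a subsequence indexed by $(n_j)_{j\in\NN}$ along which $(t(F_{n_j}))_{j\in\NN}$ is first order convergent, which is possible since every sequence of finite structures in a fixed signature has a first order convergent subsequence. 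Writing $K_j=t(F_{n_j})$ and $L_j=F_{n_j}$, we then have $I(K_j)=L_j$ for all $j$, the sequence $(K_j)_{j\in\NN}$ is first order convergent by construction, and $(L_j)_{j\in\NN}$ is first order convergent as a subsequence of $(F_n)_{n\in\NN}$.

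Next I would apply Theorem~\ref{thm-plane} to $(K_j)_{j\in\NN}$ to obtain a limit modeling $M$ of $(K_j)_{j\in\NN}$ satisfying the strong finitary mass transport principle, and then invoke Lemma~\ref{lm-scheme} for the scheme $I$. For this, the one point that needs checking is $\langle\Phi_0,M\rangle>0$. Recall that $\Phi_0(x)$ asserts that $x$ is neither a root nor a leaf; in $t(F)$ the unique root is the added node $r$, the leaves are exactly the gadget children, and every node of $F$ has at least one gadget child, so $\Phi_0$ is satisfied in $t(F)$ precisely by the nodes of $F$. As $t(F)$ has $1+|V(F)|+\sum_{v\in V(F)}(\text{color of }v)$ nodes and each color lies in $[k]$, we get $\langle\Phi_0,t(F)\rangle=|V(F)|/|V(t(F))|\ge 1/(k+2)$ for every non-empty $F$; passing to the limit along $(n_j)_{j\in\NN}$ gives $\langle\Phi_0,M\rangle\ge 1/(k+2)>0$.

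Lemma~\ref{lm-scheme} then yields that $I(M)$, with the renormalised measure $\mu'$, is a limit modeling of $(L_j)_{j\in\NN}=(F_{n_j})_{j\in\NN}$; and since $(F_n)_{n\in\NN}$ is first order convergent, the Stone pairings $\langle\psi,F_n\rangle$ converge to the same limits as along the subsequence, so $I(M)$ is in fact a limit modeling of the whole sequence $(F_n)_{n\in\NN}$. To carry over the strong finitary mass transport principle I would use the second part of Lemma~\ref{lm-scheme}: after adding to both signatures the definable Gaifman edge relation $E(x,y)\equiv\parnt(x,y)\vee\parnt(y,x)\vee\succ(x,y)\vee\succ(y,x)$ (which changes nothing about convergence or modelings), this relation is interpreted trivially by $E$ under $I$, because $\parnt$ and $\succ$ are interpreted trivially by $\Phi_1$ and $\Phi_2$; hence the neighbourhoods in $I(M)$ are exactly the neighbourhoods in $M$ restricted to $\{x:\Phi_0(x)\}$, and the principle for $M$ passes to $I(M)$ after normalisation.

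I do not expect a genuine obstacle here: Sections~\ref{sect-plane} and~\ref{sect-scheme} have already done the heavy lifting. The only place where the particular construction matters is the estimate $\langle\Phi_0,M\rangle>0$, which relies on $t$ attaching only boundedly many (at most $k$) gadget children per node, so that a fixed positive fraction of the nodes of $t(F)$ come from $F$; the only bookkeeping subtlety is the initial passage to a subsequence, after which one notes that a limit modeling along a subsequence of a first order convergent sequence is automatically a limit modeling of the whole sequence.
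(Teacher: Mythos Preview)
Your proposal is correct and follows essentially the same route as the paper's proof: pass to a first order convergent subsequence of $(t(F_n))_{n\in\NN}$, apply Theorem~\ref{thm-plane} to obtain a modeling $M$, and then use Lemma~\ref{lm-scheme} with the scheme $I$ to get a limit modeling of the subsequence, hence of $(F_n)_{n\in\NN}$. You are in fact more careful than the paper on two points it leaves implicit: you verify the hypothesis $\langle\Phi_0,M\rangle>0$ via the bound $|V(F)|/|V(t(F))|\ge 1/(k+2)$, and you explain how the ``edge relation interpreted trivially'' hypothesis in the second part of Lemma~\ref{lm-scheme} is met by passing to the Gaifman edge relation, which is legitimate since $\parnt$ and $\succ$ are interpreted trivially.
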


\begin{proof}
Let $(F_n)_{n\in\NN}$ be a first order convergent sequence of $k$-colored plane forests, and
let $(T_n)_{n\in\NN}$ be a first order convergent subsequence of the sequence $(t(F_n))_{n\in\NN}$,
which exists by the compactness.
By Theorem~\ref{thm-plane}, the sequence $(T_n)_{n\in\NN}$ has a limit modeling $M$ which satisfies the strong finitary mass transport principle.
By Lemma~\ref{lm-scheme}, the modeling $I(M)$ is a limit modeling of the sequence $I(T_n)$,
which is a subsequence of the sequence $(F_n)_{n\in\NN}$
Since $(F_n)_{n\in\NN}$ is first order convergent, $M$ is also a modeling of $(F_n)_{n\in\NN}$.
The modeling $M$ satisfies the strong finitary mass transport principle by Lemma~\ref{lm-scheme}.
\end{proof}

\section{Graphs with bounded path-width}
\label{sect-pw}

In this section, we present an interpretation scheme of graphs with bounded path-width in colored plane forests.
The core of the scheme is a recursive construction parameterized by the path-width.
However, we need to introduce additional definitions to those given in Section~\ref{subsect-pw}.
If $A$ is a finite set of integers,
then an {\em $A$-interval graph} is a semi-interval graph such that
each interval is colored with an element of $A$ and
no two intersecting intervals have the same color (an example is given in Figure~\ref{fig-pw1}).
Observe that the path-width of an $A$-interval graph is at most $|A|-1$.
On the other hand, every graph with path-width $p$ can be equipped with a semi-interval representation and a coloring
to become a $[p+1]$-interval graph.

We construct a mapping $t_A$ that assigns an $A$-interval graph $G$ a $(|A|\cdot 2^{|A|+2})$-colored plane tree $T$ of depth at most $|A|$.
The construction of the mappings $t_A$ is recursive based on $|A|$ and $|G|$.
The graph $G$ will be mapped to a tree $T$ such that following is satisfied.
Each vertex of $G$ will be associated with at least one node of $T$, and
each node of $T$ different from the root will be associated with a vertex of $G$.
The colors of the nodes of $T$ will determine the structure of $G$ in such a way
it will be possible to reconstruct the graph $G$ by a first order interpretation.
All the nodes except for the root will be colored and
the colors of the nodes will be from the set $A\times 2^A\times 2^{\{\rightarrow,\leftarrow\}}$.
Examples, which can be instructive to be looking at while reading a formal description that we now give,
can be found in Figures~\ref{fig-pw2} and~\ref{fig-pw3}.

\begin{figure}
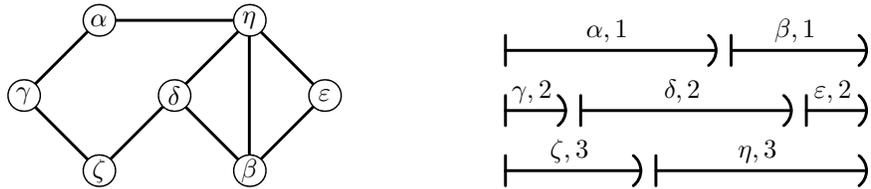

\begin{center}
\epsfbox{folim-pw.1}
\hskip 2cm
\epsfbox{folim-pw.2}
\end{center}
\caption{A $[3]$-interval graph $G$. The graph is in the left and its semi-interval representation in the right.
         Greek letters are used to denote the vertices and integers their colors.}
\label{fig-pw1}
\end{figure}

\begin{figure}
\begin{center}
\epsfbox{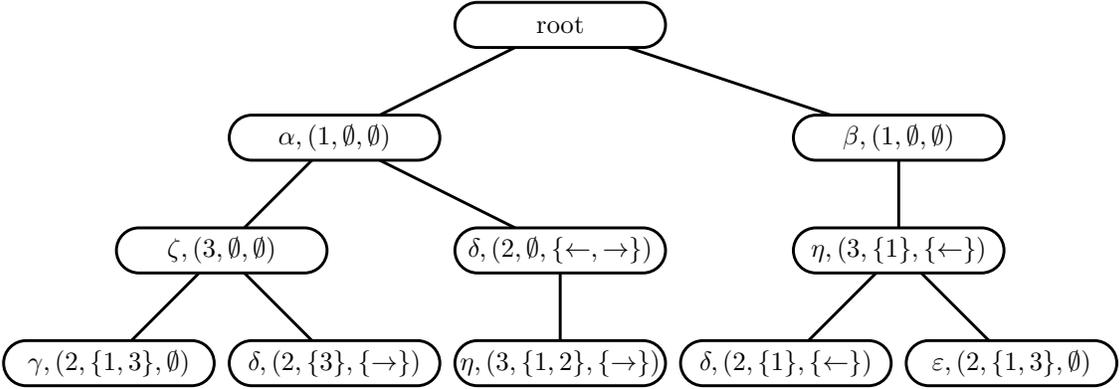}
\end{center}
\caption{The tree $t_{[3]}(G)$ of the $[3]$-interval graph $G$ depicted in Figure~\ref{fig-pw1}.
	 Each node of the tree except for the root is labelled with the associated vertex and the triple that it is colored with.}
\label{fig-pw2}
\end{figure}

\begin{figure}
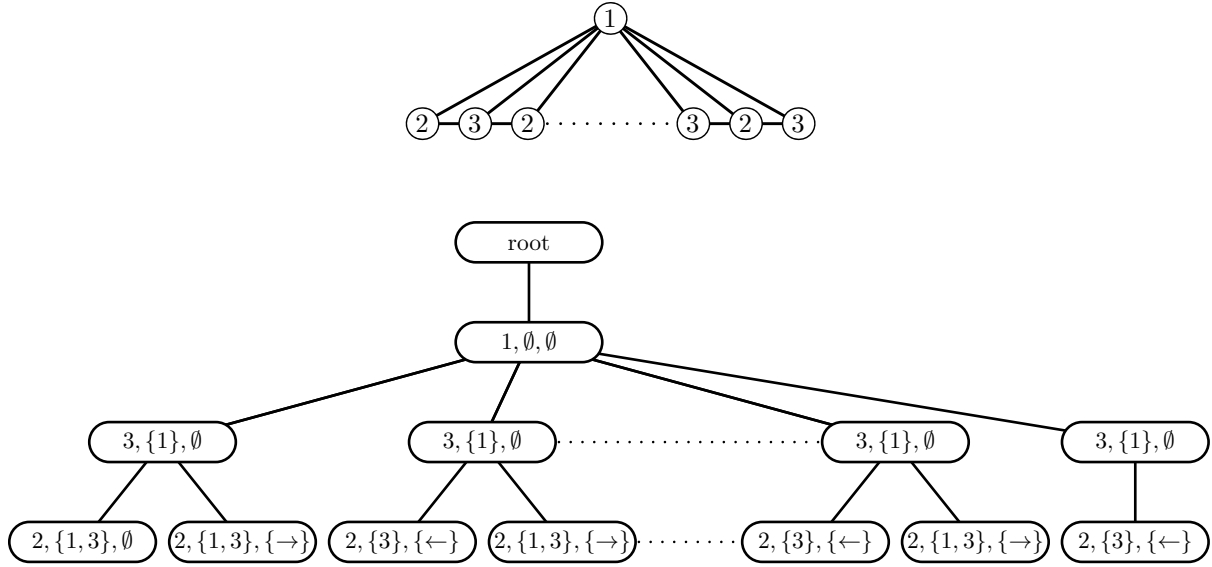

\begin{center}
\epsfbox{folim-pw.4}
\vskip 10mm
\epsfxsize 16cm
\epsfbox{folim-pw.5}
\end{center}
\caption{A fan graph $G$ (in the top) and the tree $t_{[3]}(G)$ (in the bottom).
         Each vertex of the fan is labelled with its color and
         each non-root node of the tree is labelled with the triple that it is colored with.}
\label{fig-pw3}
\end{figure}

It is convenient in our recursive construction to sometimes exhaust the graph so it becomes empty;
the empty graph will be mapped to the tree consisting of a single node.
We fix a non-empty graph $G$ for the remainder of the definition.

Let $v$ be a vertex associated with a longest interval intersecting the first segment,
let $x$ be the color of $v$, and let $A'=A\setminus\{x\}$.
Further let $G'$ be the $A'$-interval graph obtained by restricting $G\setminus\{v\}$ to the vertices
with the intervals intersecting the interval of $v$ and
by restricting the interval representation to the segments contained in the interval of $v$.
Finally,
let $G''$ be the $A$-interval graph obtained by restricting $G$ to the vertices with the intervals intersecting
the complement of the interval of $v$,
by restricting the interval representation to the segments not contained in the interval of $v$, and
by removing all the edges between the vertices of $G''$ that are also contained in $G'$.
Note that if the interval of $v$ intersects all segments, then the graph $G''$ is empty.
The tree $T=t_A(G)$ is obtained in the following way:
the root of the tree $t_{A'}(G')$ becomes the first child of the root $t_A(G'')$ and
is colored with $(x,\emptyset,\emptyset)$.
The color $x$ is inserted into the second coordinate of each node of $t_{A'}(G')$ that
is associated with a vertex adjacent to $v$ and such that
the third coordinate of its color does not contain $\leftarrow$.
Finally,
$\rightarrow$ is inserted to the third coordinate of each node of $t_{A'}(G')$ associated with a vertex in $V(G')\cap V(G'')$, and
$\leftarrow$ is inserted to the third coordinate of each node of $t_{A}(G'')$ associated with a vertex in $V(G')\cap V(G'')$.

We summarize key properties of the mappings $t_A$, which we need further, in the next lemma.

\begin{lemma}
\label{lm-pw}
Let $G$ be an $A$-interval graph and let $T=t_A(G)$. The following holds.
\begin{itemize}
\item Each vertex of $G$ with its interval intersecting the first segment is associated with a single node of $T$, and
      this node is one of the nodes on the path from the root going all the time to the first child of a node.
\item Each vertex of $G$ with its interval intersecting the last segment is associated with at most $|A|$ nodes of $T$.
\item Each vertex of $G$ is associated with at most $|A|+1$ nodes.
\item Each vertex of $G$ is associated with exactly one node such that the third coordinate of its color does not contain $\leftarrow$.
\item For every edge $vv'$ of the graph $G$,
      there exists a unique pair $u$ and $u'$ of nodes of $T$ that are associated with $v$ and $v'$ (not necessarily in this order) such that
      $u$ is in the subtree of $u'$, the color of $u$ is $(x,X,Z)$, the color of $u'$ is $(x',X',Z')$ and $x'\in X$.
\end{itemize}
\end{lemma}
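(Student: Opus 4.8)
The plan is to prove all five items simultaneously by induction on $|A|+|G|$, following the recursive structure of the definition of $t_A$. The base case is the empty graph, where $T$ consists of a single root node and all five statements hold vacuously. For the inductive step, I fix a non-empty $A$-interval graph $G$ and adopt all the notation of the definition: $v$ is the vertex on a longest interval meeting the first segment, $x$ its color, $A'=A\setminus\{x\}$, and $G'$, $G''$ the two smaller interval graphs obtained from $G$. The key observation to record first is that $G'$ is an $A'$-interval graph (so $|A'|=|A|-1$) and $G''$ is an $A$-interval graph with strictly fewer vertices than $G$ (since $v\notin V(G'')$), so the induction hypothesis applies to both $T'=t_{A'}(G')$ and $T''=t_{A}(G'')$. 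I also record that every vertex of $G$ other than $v$ lies in $G'$, in $G''$, or in both, and that $v$ itself is represented in $T$ only by the root of $T'$, which becomes the first child of the root of $T$ with color $(x,\emptyset,\emptyset)$.

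Next I would verify the five items in order. For the first item: a vertex $w\ne v$ whose interval meets the first segment of $G$ must have its interval contained in the interval of $v$ (as $v$'s interval is a longest one meeting the first segment and the first segment forces overlap, hence equal colors are forbidden only if... — more carefully, $w$'s interval meets $v$'s interval, so $w\in V(G')$, and $w$'s interval meets the first segment of $G'$); by the induction hypothesis for $G'$, $w$ is associated with a single node of $T'$ on the all-first-child path from the root of $T'$, and since the root of $T'$ is the first child of the root of $T$, this path extends to the required path in $T$. The vertex $v$ itself is the root of $T'$, i.e.\ the first child of the root of $T$, so it also lies on this path and is associated with exactly that one node. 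For the second item: a vertex whose interval meets the last segment of $G$ either has its interval contained in $v$'s interval — then its interval meets the last segment of $G'$ and by induction it gets at most $|A'|=|A|-1$ nodes in $T'$, all of which are nodes of $T$ — or it belongs to $G''$ with interval meeting the last segment of $G''$, giving at most $|A|$ nodes by induction, and these are the only nodes associated with it; combining, at most $|A|$. The third item is the analogous counting: a vertex in $V(G')\setminus V(G'')$ gets at most $|A'|+1=|A|$ nodes; a vertex in $V(G'')\setminus V(G')$ gets at most $|A|+1$ nodes; a vertex in $V(G')\cap V(G'')$ gets at most $|A'|+1$ from $T'$ plus at most... here I need the sharper claim that a vertex in $V(G')\cap V(G'')$ has its interval meeting the last segment of $G'$ (it survives into $G''$, so its interval is not contained in $v$'s, but it meets $v$'s, so it extends to the right end of $v$'s interval, which is the last segment of $G'$), hence gets at most $|A'|=|A|-1$ nodes in $T'$ by item two, for a total of at most $(|A|-1)+(|A|+1)$ — this is too weak, so the right bookkeeping is that such a vertex's nodes in $T'$ and its nodes in $T''$ share the ``continuation'' structure and the total is still $|A|+1$; I expect the clean way is to strengthen the inductive statement so that the nodes associated with a single vertex form a path going to first children, and a vertex in $V(G')\cap V(G'')$ has its $T'$-nodes hanging as a first-child-extension of its topmost $T''$-node. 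The fourth item follows from the $\leftarrow$-insertion rule: $\leftarrow$ is placed exactly on the $T''$-nodes of vertices in $V(G')\cap V(G'')$, so among all nodes associated with a given vertex, exactly the ones coming from the ``$G'$-side'' and the unique $\leftarrow$-free $T''$-node lack $\leftarrow$ — and by a careful induction exactly one of these is genuinely $\leftarrow$-free; more precisely the $\rightarrow$ marking in $T'$ and $\leftarrow$ marking in $T''$ are set up so that exactly one node per vertex is $\leftarrow$-free, which I verify by tracing the two insertion rules against the inductive hypothesis for $G'$ and $G''$.

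The fifth item, on edges, is the one I expect to be the main obstacle, since it is the heart of the later first-order interpretation. The plan is a case analysis on where the endpoints of an edge $vv'$ (renaming: an arbitrary edge $ab$ of $G$) sit. If the edge is incident to the special vertex $v$, say $b=v$, then $a\in V(G')$ and $a$ is adjacent to $v$, and the insertion rule puts $x$ into the second coordinate of the $\leftarrow$-free node of $T'$ associated with $a$; pairing that node $u$ with the root $u'$ of $T'$ (colored $(x,\emptyset,\emptyset)$, hence with first coordinate $x$), $u$ is in the subtree of $u'$ and $x'=x\in X$, and uniqueness follows because $v$ has a single associated node and the $x$-insertion picks out exactly one node of $a$. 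If the edge $ab$ lies entirely in $G'$, apply the induction hypothesis to $G'$: the witnessing pair of nodes lives in $T'\subseteq T$, and one checks that inserting $x$ into second coordinates and $\rightarrow$/$\leftarrow$ into third coordinates does not create or destroy any witnessing pair (the condition ``$x'\in X'$'' with $x'\in A'$ is unaffected by the new color $x\notin A'$, and the ancestor relation is unchanged). If $ab$ lies in $G''$ but not in $G'$ — note all edges of $G$ not incident to $v$ and not inside $G'$ survive in $G''$ — apply induction to $G''$; again the modifications to colors don't interfere. The remaining point is that no edge gets a witnessing pair from \emph{both} sides: an edge inside $V(G')\cap V(G'')$ is, by construction of $G''$, present in $G'$ and \emph{removed} from $G''$, so it is handled on the $G'$-side only, and one checks the $G''$-copies of its endpoints carry no witnessing configuration between themselves (their second coordinates relative to each other are untouched by the current step). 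Assembling these cases, together with the uniqueness coming from item four (the ``$\leftarrow$-free'' node is the canonical representative at each level), gives the stated unique pair $u,u'$. The delicate bookkeeping — making sure ``unique'' survives across the recursion — is why I would fold a strengthened, more explicit inductive invariant (spelling out exactly which associated node carries which marks) into the statement before running the induction.
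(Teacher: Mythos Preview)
Your overall approach---induction on $|A|+|G|$ following the recursion defining $t_A$---is the same as the paper's, and your treatment of items~1, 4, and 5 is along the right lines. The genuine gap is in items~2 and~3, where you correctly notice that a vertex $w\in V(G')\cap V(G'')$ has its interval meeting the \emph{last} segment of $G'$ (so at most $|A'|=|A|-1$ nodes in $T'$ by item~2), but then bound its contribution in $T''$ by the generic $|A|+1$, obtain $2|A|$, and propose to rescue the count by strengthening the inductive invariant.

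No strengthening is needed: you are missing the dual observation. A vertex $w\in V(G')\cap V(G'')$ has its interval meeting $v$'s interval and extending past its right end, so after restricting to the segments of $G''$ the interval of $w$ contains the \emph{first} segment of $G''$. Item~1 applied inductively to $G''$ then gives that $w$ is associated with a \emph{single} node of $T''$. Hence the total is at most $(|A|-1)+1=|A|\le |A|+1$, which handles item~3; the same observation repairs your item~2, where your case split (``interval contained in $v$'s interval'' versus ``belongs to $G''$'') silently drops the overlap case---a vertex meeting the last segment of $G$ and lying in $G'$ necessarily also lies in $G''$, and its $T'$-nodes must be counted too. Once you use item~1 for $G''$ to pin the $T''$-count at one, all the bookkeeping in the paper's proof (including the clean verification of item~4 for overlap vertices: the unique $\leftarrow$-free node sits in $T'$, since the single $T''$-node receives $\leftarrow$) goes through without any auxiliary invariant.
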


\begin{proof}
The proof proceeds by induction on $|A|$ and $|G|$.
If $G$ has a vertex $v$ with its interval intersecting all segments,
then $v$ is associated with a single node of $t_A(G)$.
The statements for other vertices of $G$
follow by induction applied to the graph $G\setminus\{v\}$ in the definition of $t_A$ (note that
node associated with $v$ is the only child of the root of $T$ and
the edges incident with $v$ will be represented in the way described in the last point of the lemma).

In the remainder of the proof,
we assume $G$ has no such vertex and let $G'$ and $G''$ be the $A'$-interval and $A$-interval graphs as in the definition of $t_A(G)$.
Since each edge of $G$ is either in $G'$ or in $G''$ or
it is incident with the vertex associated with the first child of the root of $t_A(G)$,
the fifth statement in the lemma follows. We now verify the remaining four statements. 

Consider a vertex $v$ of $G$. 
If the interval of $v$ intersects the first segment,
then $v$ is associated with the first child of the root of $t_A(G)$ or it belongs to $G'$ (and not $G''$).
If $v$ is associated with the root, then the first and the forth statements clearly hold,
and if $v$ belongs to $G'$,
then the first and the forth statements follow by induction applied to the $A'$-interval graph $G'$.

Next, suppose that the interval of $v$ intersects the last segment.
If the vertex $v$ belongs to $G'$,
then it is associated with at most $|A'|=|A|-1$ nodes of $t_{A'}(G')$ and a single node of $t_A(G'')$, and
only one of the nodes of $t_{A'}(G')$ has the property that the third coordinate of its color does not contain $\leftarrow$.
If the vertex $v$ does not belong to $G'$,
then it is associated with at most $|A|$ nodes of $t_{A}(G'')$ and
exactly one of these nodes has the property that the third coordinate of its color does not contain $\leftarrow$.

Finally,
if the interval of $v$ does not intersect the first or the last segment,
then $v$ is associated with at most $|A'|+1=|A|$ nodes of $t_{A'}(G')$ if $v$ belongs to $G'$ only, 
$v$ is associated with at most $|A|+1$ nodes of $t_{A}(G'')$ if $v$ belongs to $G''$ only, or
$v$ is associated with at most $|A'|=|A|-1$ nodes of $t_{A'}(G')$ and a single node of $t_A(G'')$ if $v$ belongs to both $G'$ and $G''$.
In the first and the last case,
only one of the $|A'|$ nodes of $t_{A'}(G')$ associated with $v$ has the property that the third coordinate of its color does not contain $\leftarrow$.
In the middle case, exactly one of the $|A|+1$ nodes of $t_{A}(G'')$ associated with $v$ has this property.
\end{proof}

The following lemma is crucial to build the interpretation scheme.

\begin{lemma}
\label{lm-pw-fo}
Fix a finite set $A$.
\begin{itemize}
\item There exists a first order formula $\Phi^A_v(u,u')$ that describes whether two nodes of $t_A(G)$ are associated with the same vertex of a graph $G$.
\item There exists a first order formula $\Phi^A_e(u,u')$ that describes whether two nodes of $t_A(G)$ are associated with adjacent vertices of a graph $G$.
\end{itemize}
Moreover, both formulas $\Phi^A_v(u,u')$ and $\Phi^A_e(u,u')$ are local.
\end{lemma}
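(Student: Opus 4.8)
The plan is to prove the two claims together, by induction on $|A|$, by unwinding the recursive definition of $t_A$ and leaning on the structural statement of Lemma~\ref{lm-pw}. The crucial preliminary fact I would establish is a \emph{bounded spread} property: there is a constant $D_A$, depending only on $|A|$, such that any two nodes of $t_A(G)$ associated with the same vertex of $G$ are at distance at most $D_A$ in the Gaifman graph of $t_A(G)$ (whose edges are the parent--child edges and the successor edges). This is what makes locality possible at all: a node of $t_A(G)$ may have unboundedly many children, but all of the at most $|A|+1$ nodes attached to a single vertex live in a ball of bounded radius, so ``$u,u'$ are the same vertex'' can be witnessed entirely inside such a ball.

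To prove bounded spread I would trace, for a fixed vertex $w$ of $G$ with interval $[a,b)$, how $w$ is distributed in the recursion $G=G_0\to G_1\to\cdots$ with $G_{i+1}=G_i''$; the children of the root of $t_A(G)$ are exactly the roots of the trees $t_{A_i'}(G_i')$, in this order. The key combinatorial observation is that $w$ cannot be spread over many consecutive steps: if the interval of $w$ meets the first segment of $G_i$ but $w$ is not the vertex chosen at step $i$, then the interval of $w$ is contained in the interval of the chosen vertex (which is a longest one meeting that segment), so $w$ does not survive into $G_{i+1}$. Hence, writing $i^*$ for the least $i$ such that the right endpoint of the vertex chosen at step $i$ exceeds $a$, the vertex $w$ is chosen at step $i^*$ or at step $i^*+1$. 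In the first case $w$ is a single node of $t_A(G)$, the $(i^*+1)$-th child of the root; in the second case the nodes of $w$ are its nodes inside the subtree below the $(i^*+1)$-th child of the root --- there $w$ meets the last segment, so it has at most $|A|-1$ of them, lying within distance $D_{A_{i^*}'}$ of each other by the inductive hypothesis for the $(|A|-1)$-element alphabet $A_{i^*}'$ --- together with one further node, the $(i^*+2)$-th child of the root. Since consecutive children of the root are joined by a successor edge and every subtree of a child of the root has depth at most $|A|-1$, this gives $D_A\le 2|A|$ and, more importantly, pins down the finitely many possible ``shapes'' (walks of parent/child/successor steps of length at most $D_A$, decorated with the traversed colours) that a same-vertex pair can realise.

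With this in hand I would define $\Phi^A_v(u,u')$ as the finite disjunction, over the admissible shapes, of local formulas: each disjunct existentially guesses a walk of bounded length starting at $u$ --- a nested sequence of neighbour-quantifiers --- checks that it ends at $u'$, and verifies that the colours of the traversed nodes, in particular the markings $\rightarrow,\leftarrow$ in their third coordinates and the sets in their second coordinates, together with which of them are children of a common node, are consistent with the walk being a genuine same-vertex walk. Correctness of the last check is exactly the content of the structural analysis above, with the inductive hypothesis $\Phi^{A'}_v$ governing the portion of a walk that stays inside a single child-subtree of the root; that portion can be handled because ``lying in that subtree'' is itself locally expressible, the subtree having depth at most $|A|-1$, so $\Phi^{A'}_v$ relativises. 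For the edge formula I would invoke the last item of Lemma~\ref{lm-pw}: $vv'$ is an edge of $G$ if and only if there are nodes $u_1,u_2$ of $t_A(G)$ associated with $v$ and $v'$ with $u_1$ a descendant of $u_2$ (hence at distance at most $|A|$, since $t_A(G)$ has depth at most $|A|$), with the colour of $u_1$ equal to $(x_1,X_1,Z_1)$, the colour of $u_2$ equal to $(x_2,X_2,Z_2)$, and $x_2\in X_1$. Thus I would let $\Phi^A_e(u,u')$ assert the existence of $u_1$ within distance $D_A$ of $u$ with $\Phi^A_v(u,u_1)$, and of $u_2$ within distance $|A|$ of $u_1$ with $\Phi^A_v(u',u_2)$, such that $u_1$ is a descendant of $u_2$ or vice versa and the colour condition holds; every quantified node then lies in the ball of radius $2D_A+|A|$ around $\{u,u'\}$, so $\Phi^A_e$ is again local.

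The main obstacle I expect is the bounded-spread analysis together with the bookkeeping that certifies a candidate walk as a genuine same-vertex walk using only the colours and the local tree shape: the third coordinates $\{\rightarrow,\leftarrow\}$ were introduced into $t_A$ precisely for this purpose, but one must check that they, the second coordinates, and the first-/last-segment structure from Lemma~\ref{lm-pw} really do suffice, and that the relativisation of $\Phi^{A'}_v$ to a child-subtree of bounded depth goes through; these steps are routine but delicate.
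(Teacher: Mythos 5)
Your proposal is correct in outline and reaches the same conclusions, but it takes a genuinely different route to defining $\Phi^A_v$ than the paper does. The paper writes down a single, non-inductive ``one-jump'' formula $\Phi'(u,v)$: it says that $u$ carries $\rightarrow$, that $v$ carries $\leftarrow$ with the same first colour coordinate, and that $v$ lies on the first-child path descending from the successor of the \emph{closest} ancestor $w$ of $u$ admitting such a $v$ (the minimality over $w$ is the key subtlety, absent from your sketch). It then sets $\Phi^A_v$ to be the $|A|$-fold composition of $\Phi'' := {=}\lor\Phi'\lor(\Phi')^{-1}$, using the third bullet of Lemma~\ref{lm-pw} (at most $|A|+1$ nodes per vertex) to justify the length of the chain, and invokes the depth bound only at the very end to massage the formulas into local ones. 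You instead do an explicit induction on $|A|$, first proving a bounded-spread lemma $D_A\le 2|A|$ and then taking a disjunction over shapes of walks, with $\Phi^{A'}_v$ relativised to child-subtrees handling the within-subtree portions. Both are sound; the paper's is shorter because it never needs a separate spread estimate --- the depth bound from Lemma~\ref{lm-pw} already gives $D_A\le 2|A|$ for free, so your preliminary lemma, while correct and illuminating about why exactly $2|A|$ is the right order, is strictly speaking extra machinery. Your inductive/relativised formulation arguably makes the underlying combinatorics more visible, at the price of more scaffolding.

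Two small points. First, in your spread analysis the claim that the extra node of $w$ (when $w$ survives past step $i^*$) is \emph{the} $(i^*+2)$-th child of the root is only right when $w$ is the vertex chosen at step $i^*+1$; in general $w$'s node in $t_A(G_{i^*+1})$ is somewhere on the ``always first child'' path below the $(i^*+2)$-th child (Lemma~\ref{lm-pw}, first bullet), possibly a proper descendant. This does not change the $O(|A|)$ bound. Second, the content you defer as ``routine but delicate'' --- certifying from colours and local shape that a candidate walk really witnesses the same vertex --- is precisely where the paper's concrete definitions of $\Phi$ and $\Phi'$ (in particular the minimality over the ancestor $w$, which prevents a deeper ancestor from misrouting the jump) do the work; if you carry out your plan you will need something functionally equivalent to that minimality clause, and the certification is not quite as automatic as ``the markers suffice'' suggests. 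The treatment of $\Phi^A_e$ via the fifth bullet of Lemma~\ref{lm-pw} matches the paper's exactly.
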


\begin{proof}
Let $\Phi(u,v,w)$ be the first order formula that is satisfied
if $u$ is a node colored with $(x,X,Z)$ where $Z$ contains $\rightarrow$,
$v$ is a node colored with $(x,X',Z')$ where $Z'$ contains $\leftarrow$ (note that the first coordinate of the two color triples is the same), and
$w$ is a node on the path from $u$ to the root (possibly $u=w$) such that $v$ is on the path from the successor of $w$ that
always goes to the first child of a node.
Note that such a first order formula exists since the depth of $T$ is bounded and so are the lengths of the paths from $u$ to $w$ and from $w$ to $v$.

Let $\Phi'(u,v)$ be the first order formula that is satisfied
if there exists a vertex $w$ such that $\Phi(u,v,w)$ and
there are no vertices $w'$ and $v'$ such that $\Phi(u,v',w')$ and $w'$ is on the path from $u$ to $w$ (including $u$ but excluding $w$).
Note that if two nodes $u$ and $v$ of $t_A(G)$ satisfy $\Phi'(u,v)$, then they are associated with the same vertex of $G$.
We can now iterate: if three nodes $u$, $v$ and $v'$ satisfy $\Phi'(u,v)$ and $\Phi(v,v')$,
then they are associated with the same vertex of a graph $G$,
if four nodes $u$, $v$, $v'$ and $v''$ satisfy $\Phi'(u,v)$, $\Phi(v,v')$ and $\Phi(v',v'')$,
then they are associated with the same vertex of a graph $G$, etc.
This fully characterizes nodes associated with the same vertex of $G$ (follow the splits in the recursive definition of $t_A$).
Hence, we can define the first order formula $\Phi^A_v(u,u')$ as
$$\exists v_1,\ldots,v_{|A|-1}\;\Phi''(u,v_1)\land\Phi''(v_1,v_2)\land\cdots\land\Phi''(v_{|A|-1},u')\;\mbox{,}$$
where $\Phi''(u,v)$ is the first order formula $(u=v)\lor\Phi'(u,v)\lor\Phi'(v,u)$.
Note that it is enough to consider $|A|-1$ intermediate nodes
since each vertex is associated with at most $|A|+1$ nodes of $t_A(G)$.

The definition of the formula $\Phi^A_e(u,u')$ is now easy.
Two vertices $w$ and $w'$ of $G$ are adjacent
if there exists nodes $v$ and $v'$ associated with them (not necessarily in this order, i.e., $v$ can be associated with $w'$ and $v'$ with $w$) such that
$v$ is in the subtree of $v'$, the color of $v$ is $(x,X,Z)$, the color of $v'$ is $(x',X',Z')$ and $x'\in X$.
So, we set the formula $\Phi^A_e(u,u')$ to express that there exist nodes $v$ and $v'$ such that
either $\Phi^A_v(u,v)\land\Phi^A_v(u',v')$ or $\Phi^A_v(u,v')\land\Phi^A_v(u',v)$,
$v$ is in the subtree of $v'$, the color of $v$ is $(x,X,Z)$, the color of $v'$ is $(x',X',Z')$ and $x'\in X$.

Since the depth of the tree $T$ is bounded by $|A|$,
it is easy to modify the quantifications in the constructed formulas to obtain local formulas.
\end{proof}

We now describe the interpretation scheme $I$ of graphs with bounded path-width in colored plane forests.
Our aim is to invert the mapping $t_{[p+1]}$.
To do so, we have to present a first order formula $\Phi_0(u)$, which determines which nodes of $I$ will be picked to correspond to vertices, and
a first order formula $\Phi_1(u,u')$, which determines the adjacency between the vertices.
The first order formula $\Phi_0(u)$ picks nodes such that the third color of their coordinate does not contain $\leftarrow$;
each vertex is associated with exactly one such node by Lemma~\ref{lm-pw}.
The formula $\Phi_1(u,u')$ is just the formula $\Phi^{[p+1]}_e(u,u')$ from Lemma~\ref{lm-pw-fo}.

The construction of the mappings $t_A$, the formulas $\Phi_0$ and $\Phi_1$, and Lemma~\ref{lm-pw} imply the following.

\begin{lemma}
\label{lm-pw-scheme}
If $G$ is a graph with path-width $p$ and $G'$ is a $[p+1]$-interval graph obtained from $G$
by equipping it with a semi-interval representation and a vertex coloring,
then $I(t_{[p+1]}(G'))$ is a graph isomorphic to $G$.
Moreover, the number of nodes of the tree $t_{[p+1]}(G')$ is at most $(p+1)|G|+1$.
\end{lemma}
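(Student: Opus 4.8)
The proof falls into two independent parts: showing that $I(t_{[p+1]}(G'))$ is isomorphic to $G$, and establishing the bound on the number of nodes of $t_{[p+1]}(G')$. The first is a direct bookkeeping consequence of Lemmas~\ref{lm-pw} and~\ref{lm-pw-fo}; for the second I would argue by induction following the recursive definition of $t_A$, the only real point being the choice of a sufficiently strong induction hypothesis.

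For the isomorphism, $\Phi_0$ picks exactly the non-root nodes whose colour has $\leftarrow$ absent from its third coordinate. By the fourth item of Lemma~\ref{lm-pw} each vertex of $G'$ is associated with exactly one such node, and each non-root node of $t_{[p+1]}(G')$ is associated with exactly one vertex; hence the map $\sigma$ sending a node $u$ of the domain of $I(t_{[p+1]}(G'))$ to the vertex of $G'$ it is associated with is a bijection onto $V(G')$, which coincides with $V(G)$ since $G'$ is just $G$ equipped with a semi-interval representation and a colouring. As $\Phi_1=\Phi^{[p+1]}_e$ and, by the second item of Lemma~\ref{lm-pw-fo}, $\Phi^{[p+1]}_e(u,u')$ holds precisely when $u$ and $u'$ are associated with adjacent vertices of $G'$, for distinct $u,u'$ in the domain this is equivalent to $\sigma(u)\sigma(u')\in E(G')$. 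Thus $\sigma$ is an isomorphism of $I(t_{[p+1]}(G'))$ onto the graph $G'$, and the underlying graph of $G'$ is $G$.

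For the node count, I would prove by induction the sharper inequality
$$\bigl|t_A(H)\bigr|\;\le\;|A|\cdot|H|+1-(|A|-1)\cdot a(H)$$
for every finite set $A$ of integers and every $A$-interval graph $H$, where $a(H)$ denotes the number of vertices of $H$ whose interval meets the first segment and $a(\emptyset)=0$; applying it with $A=[p+1]$ and $H=G'$ and dropping the non-negative term $(|A|-1)a(G')$ gives $|t_{[p+1]}(G')|\le(p+1)|G|+1$. The base case $H=\emptyset$ is immediate. For non-empty $H$, take $v$, $x$, $A'=A\setminus\{x\}$, $H'$ and $H''$ as in the construction; since $t_A(H)$ is formed by gluing $t_{A'}(H')$ to $t_A(H'')$ along a single node, $|t_A(H)|=|t_{A'}(H')|+|t_A(H'')|$, and the induction hypothesis applies to both pieces because $|A'|=|A|-1$ and $|H''|<|H|$. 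Writing $|H|=1+p_0+q+k$ with $k=|V(H')\cap V(H'')|$, $p_0=|V(H')|-k$ and $q=|V(H'')|-k$, one has $|H'|=p_0+k$, $|H''|=q+k$, $a(H)=1+a(H')$, and $a(H'')\ge k$ because every vertex of $V(H')\cap V(H'')$ has an interval meeting the first segment of $H''$; substituting the two induction bounds and simplifying, the desired inequality collapses to the single inequality $a(H')\le p_0$. The degenerate case $H''=\emptyset$ (when the interval of $v$ meets every segment, so that $H'=H\setminus v$) is handled in the same way, using $a(H')=a(H)-1$ and the trivial bound $a(H)\le|H|$.

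The crux, and the reason the naive hypothesis $|t_A(H)|\le|A|\,|H|+1$ does not close the induction, is that the $k$ vertices of $V(H')\cap V(H'')$ are charged by the induction hypothesis for \emph{both} pieces $H'$ and $H''$; the correction term $-(|A|-1)a(H)$ is precisely what absorbs this over-counting. The inequality $a(H')\le p_0$ needed to make it work is in turn forced by the choice of $v$ as a vertex with a \emph{longest} interval among those meeting the first segment: any other vertex $w$ whose interval meets the first segment then has its interval contained in the interval of $v$, hence lies in $V(H')\setminus V(H'')$, so the number $a(H')$ of such vertices cannot exceed $p_0=|V(H')\setminus V(H'')|$. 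I expect this to be the only real obstacle; the remaining verifications, including the degenerate case and the routine algebra hidden in ``simplifying'', are straightforward.
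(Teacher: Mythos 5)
Your isomorphism argument is correct: by item 4 of Lemma~\ref{lm-pw} the set $\{u:\Phi_0(u)\}$ is in bijection with $V(G')=V(G)$, and Lemma~\ref{lm-pw-fo} guarantees $\Phi_1=\Phi^{[p+1]}_e$ recovers the adjacency, so $I(t_{[p+1]}(G'))\cong G$. You are also right that the naive hypothesis $|t_A(H)|\le|A|\,|H|+1$ does not close the induction once $k=|V(H')\cap V(H'')|\ge 2$, and a corrected invariant is needed.

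However, your chosen invariant is supported by the identity $a(H)=1+a(H')$, and this identity is false in general. It fails precisely when $v$ is the only vertex of $H$ whose interval meets the first segment of $H$ (so $a(H)=1$) while $H'\neq\emptyset$: then the first segment of $H'$ is a strictly later segment, and $a(H')\ge 1$, so $a(H)<1+a(H')$. Worse, in this case the reduced condition $a(H')\le p_0$ can also fail, because the vertices of $H'$ meeting $H'$'s first segment need not lie in $V(H')\setminus V(H'')$. A concrete instance with $A=\{1,2\}$: $v$ with interval $[0,3)$ of colour $1$, $w$ with interval $[2,5)$ of colour $2$, $z$ with interval $[5,7)$ of colour $1$. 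Here $a(H)=1$, $H'=\{w\}$ with interval $[2,3)$ so $a(H')=1$, and $V(H')\cap V(H'')=\{w\}$ so $p_0=0$; thus $a(H')=1>0=p_0$. (The target inequality still holds in this example, but not for the reason you give.) The same issue infects your treatment of the degenerate case $H''=\emptyset$, where you again invoke $a(H')=a(H)-1$.

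The gap is repairable: split on whether $a(H)\ge 2$ or $a(H)=1$. When $a(H)\ge 2$, the first segment of $H'$ is the first segment of $H$, the identity $a(H')=a(H)-1$ does hold, and all $a(H')$ of those vertices have intervals contained in $J_v$, hence lie in $V(H')\setminus V(H'')$, giving $a(H')\le p_0$; your algebra then goes through. When $a(H)=1$, substitute $a(H)=1$ directly into the difference of the two sides and use only $a(H'')\ge k$, $a(H')\ge 0$, and $|H'|\ge k$; the surplus reduces to $|H'|+(|A|-2)\bigl(k+a(H')\bigr)$, which is nonnegative for $|A|\ge 2$ (and for $|A|=1$ one has $H'=\emptyset$, $k=0$). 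With that case split the induction closes. One can also avoid the global invariant entirely by sharpening item~3 of Lemma~\ref{lm-pw} from ``at most $|A|+1$'' to ``at most $|A|$'' (in the case $w\in V(G')\cap V(G'')$ use item~2 for $G'$ and item~1 for $G''$, which together give $|A|-1+1=|A|$; in the other two cases the recursive bound is already $\le|A|$); the node count $|A|\,|G|+1$ then follows immediately vertex by vertex.
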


We are now ready to prove the remaining of our results.

\begin{theorem}
\label{thm-pw}
For every integer $p$, every first order convergent sequence of graphs with path-width at most $p$ has a limit modeling
which satisfies the strong finitary mass transport principle.
\end{theorem}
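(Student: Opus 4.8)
The plan is to realise graphs of bounded path-width as images, under the interpretation scheme $I$ of Section~\ref{sect-pw}, of coloured plane trees, and then to invoke Theorem~\ref{thm-omega} together with Lemma~\ref{lm-scheme}. As in the proof of Theorem~\ref{thm-plane}, if the orders of the graphs in a first order convergent sequence $(G_n)_{n\in\NN}$ of graphs of path-width at most $p$ do not tend to infinity, then the graphs $G_n$ are eventually isomorphic and any one of them, viewed as a finite modeling, is a limit modeling satisfying the strong finitary mass transport principle; so I would assume $|G_n|\to\infty$.

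For each $n$, equip $G_n$ with a semi-interval representation of width at most $p$ and a vertex colouring turning it into a $[p+1]$-interval graph $G'_n$, and form the coloured plane tree $t_{[p+1]}(G'_n)$, colouring the root with one extra fixed colour (which $\Phi_0$ then also excludes), so that the number of colours depends only on $p$. These choices need not make $(t_{[p+1]}(G'_n))_{n\in\NN}$ first order convergent, but by compactness some subsequence is, and since $(G_n)_{n\in\NN}$ is first order convergent, a limit modeling of the corresponding subsequence of $(G_n)_{n\in\NN}$ is a limit modeling of $(G_n)_{n\in\NN}$; so after reindexing I may assume $(T_n)_{n\in\NN}:=(t_{[p+1]}(G'_n))_{n\in\NN}$ is first order convergent. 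By Lemma~\ref{lm-pw-scheme}, $I(T_n)\cong G_n$ and $|T_n|\le(p+1)|G_n|+1$, so $\langle\Phi_0,T_n\rangle=|G_n|/|T_n|\ge 1/(p+2)$. By Theorem~\ref{thm-omega}, the sequence $(T_n)_{n\in\NN}$ has a limit modeling $M$ satisfying the strong finitary mass transport principle, and $\langle\Phi_0,M\rangle\ge 1/(p+2)>0$. Lemma~\ref{lm-scheme} then yields that $I(M)$, equipped with the renormalised measure, is a limit modeling of $(I(T_n))_{n\in\NN}$, and hence of $(G_n)_{n\in\NN}$.

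It remains to check that $I(M)$ satisfies the strong finitary mass transport principle. The ``moreover'' part of Lemma~\ref{lm-scheme} does not apply here, since the edge relation is not interpreted trivially (the signature of coloured plane forests has no edge relation), so a direct argument based on the structure of $M$ is needed. Two facts are used. First, $M$ satisfies the same first order sentences as the trees $T_n$ for large $n$, hence has depth at most $p+1$ and, by Lemma~\ref{lm-pw}, each of its nodes lies in an associated class of size at most $p+1$ all of whose members are within bounded distance of one another in $M$ (this is exactly why the formulas of Lemma~\ref{lm-pw-fo} are local). Second, by the last item of Lemma~\ref{lm-pw}, every edge of $I(M)$ is witnessed by an ancestor--descendant pair of nodes of $M$. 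Consequently the edge relation of $I(M)$ is a union of boundedly many relations, each of which, after fixing one of the at most $p+1$ associated nodes of one endpoint and one of its at most $p+1$ ancestors, expresses the other endpoint as a partial function of the first, and each such partial function is a composition of boundedly many elementary tree-navigation steps (passing to the parent, a child, the successor, or the predecessor). I would then verify that mass is transported correctly along each elementary step, using that $M$ is built from the measure preserving maps $x\mapsto x+\sqrt 2\mod 1$ and $\zeta$ and that its child--parent edges are precisely the finitary and infinitary arcs; here, exactly as in the proof of Lemma~\ref{lm-composition}, one also uses that $\mu$ is the Stone measure of a first order convergent sequence, which governs how the measure of a basic set of Hintikka chains scales between a chain and its parent chain. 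Summing over the bounded number of relations and over the two orientations of each edge gives $a\,\mu'(A)\le b\,\mu'(B)$ for all measurable $A$ and $B$ satisfying the degree hypotheses.

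I expect this last step to be the main obstacle. Unlike in the bounded-degree setting, adjacency in $I(M)$ may have unbounded degree on both sides, so one cannot argue by approximation with finite graphs; the bookkeeping must be carried out by hand, distinguishing the finitary arcs (where the relevant internal coordinate map is the boundedly-many-to-one measure preserving map $s\mapsto ms\mod 1$, compensated by the scaling of the Hintikka-chain measure between the two chains) from the infinitary arcs (where it is the measure preserving bijection $\zeta$) and from the child--parent edges that carry a positive-measure part of the modeling into its measure-zero part.
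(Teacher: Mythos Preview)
Your proposal is correct and follows essentially the same route as the paper: obtain a limit modeling $M_T$ of a convergent subsequence of the coloured plane trees $t_{[p+1]}(G'_n)$ via Theorem~\ref{thm-omega}, apply Lemma~\ref{lm-scheme} to conclude that $I(M_T)$ is a limit modeling of $(G_n)_{n\in\NN}$, and then verify the strong finitary mass transport principle by hand, exploiting the explicit finitary/infinitary arc structure built in the proof of Lemma~\ref{lm-composition}. Your identification of the three regimes (finitary arcs with the $m$-to-one map $s\mapsto ms\bmod 1$ compensated by the Hintikka-chain measure ratio, infinitary arcs governed by $\zeta$, and edges landing in a measure-zero part) matches the paper's decomposition into the sets $C_{i,j}$, $D_i$, $C_\infty$, $D_\infty$; the paper packages this as countably many invertible measure preserving bijections $f_i:U_i\to V_i$ together with finitely many infinite-degree pieces of zero measure on one side, and then finishes with the two displayed sums, but the content is what you outlined.
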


\begin{proof}
Let $(G_n)_{n\in\NN}$ be a first order convergent sequence of graphs with path-width at most $p$.
Consider a first order convergent sequence $(T_n)_{n\in\NN}$ of $((p+1)2^{p+3})$-colored plane trees that
is a subsequence of $t_{[p+1]}(G_n)$.
By Theorem~\ref{thm-omega}, this sequence has a limit modeling $M_T$.
Lemmas~\ref{lm-scheme} and~\ref{lm-pw-scheme} yield that $I(M_T)$ is a limit modeling of the sequence $(G_n)_{n\in\NN}$.

It remains to show that the constructed modeling $I(M_T)$ satisfies the strong finitary mass transport principle.
To show this, we need to inspect the construction given in the proof of Lemma~\ref{lm-composition} and the mapping $t_{[p+1]}$.
Lemma~\ref{lm-pw-fo} implies that the edges of $G$ can be described by a first order local formula $\Phi^{[p+1]}_e(u,u')$.
The $d$-Hintikka types of $u$ and $u'$, where $d$ is the quantifier depth of $\Phi^{[p+1]}_e$, and
the relative position of the nodes $u$ and $u'$ in the tree $T$ determine whether the formula $\Phi^{[p+1]}_e(u,u')$ is satisfied.
More precisely, the relative position $u$ and $u'$ plays a role only if one can be reached from the other
by a bounded number of moves, where the bound depends on $\Phi^{[p+1]}_e$ only, formed by traversing the edges in the direction to the root,
traversing only the edges to the first child in the direction from the root, and making the predecessor and successor steps.
The initial and the final part of these moves correspond to ``walking'' from the nodes $u$ and $u'$
to nodes $v$ and $v'$ associated with the same vertices as $u$ and $u'$,
such that $\Phi^{[p+1]}_v(u,v)$ and $\Phi^{[p+1]}_v(u',v')$ where $\Phi^{[p+1]}_v$ is the formula from Lemma~\ref{lm-pw-fo}, and
the middle part consists of one or more moves from to $v$ to $v'$ (or vice versa) following the edges towards the root only.
By Lemma~\ref{lm-pw}, the pair $v$ and $v'$ is unique for every pair $u$ and $u'$ adjacent in $G$ and
thus the (shortest) sequence of the moves from $u$ to $u'$ is always unique.

We have shown that we can split the edge set of $I(M_T)$ into finitely many sets corresponding
to different combinations of pairs of $d$-Hintikka types of $u$ and $u'$ and the relative positions of $u$ and $u'$.
Fix one such combination. Let $F$ be the set of the corresponding edges, and
let $C$ and $D$ be the sets of nodes of these two $d$-Hintikka types.
We will also refer to the nodes in $C$ and $D$ as vertices when we view them as vertices of $I(M_T)$.
Let $f_C$ and $f_D$ be the mappings corresponding to the initial and the final parts of the moves as described in the previous paragraph.
Since each of the moves in these parts corresponds to an invertible measure preserving transformation between the nodes of $M_T$,
both $f_C$ and $f_D$ are invertible measure preserving transformations (with respect to the measure of $M_T$).
By symmetry, we can assume that $f_C(u)$ is in the subtree of $f_D(u')$ for every $uu'\in F$ (otherwise we swap the considered $d$-Hintikka types).

Observe that $uu'$ is an edge if and only if the color of $f_C(u)$ is $(x,X,Z)$, the color of $f_D(u')$ is $(x',X',Z')$ and $x'\in X$.
This implies that the edges of $F$ induce a forest $T_F$ in $I(M_T)$.
Since all the vertices of $I(M_T)$ contained in $C$ have degree one in this forest,
we can define $f_T$ to be the mapping from $C$ to $D$ that assigns $u\in C$ its unique neighbor in the forest $F_T$.
Let $D_i$, $i\in\NN$, be the set of vertices of degree exactly $i$ in the forest $T_F$.
Observe that the nodes of $M_T$ contained in $D_i$ can be characterized by a first order formula and
thus each $D_i$ is measurable. Let $C_i$ be the neighbors of the vertices of $D_i$ in $T_F$ and
let 
$$C_{\infty}=C\setminus\bigcup_{i\in\NN} C_i\mbox{ and }D_{\infty}=D\setminus\bigcup_{i\in\NN} D_i\;\mbox{.}$$
Note that $C_{\infty}$ contains the neighbors of the vertices in $D_{\infty}$, and
all sets $C_i$, $i\in\NN$, $C_{\infty}$ and $D_{\infty}$ are measurable.
We must now dive deeply into the construction presented in the proof of Lemma~\ref{lm-composition}.
For every $u\in C_i$, $i\in\NN$,
all the edges on the path from $f_C(u)$ to $f_D(f_T(u'))$ in $M_T$ are finitary (see the definition in the proof of Lemma~\ref{lm-composition}).
The way the finitary edges are defined allows us to split each set $C_i$ into $i$ measurable sets $C_{i,1},\ldots,C_{i,i}$ such that
$f_T$ is an invertible measure preserving transformation from $C_{i,j}$ to $D_i$ (with respect to the measure of $M_T$).
Note that since the measure of $C$ is finite, the sum of the measures of $C_{i,j}$ is finite.
On the other hand, for every $u\in C_{\infty}$, at least one of the edges on the path $f_C(u)$ to $f_D(f_T(u'))$ in $M_T$ is infinitary.
The way the infinitary edges are introduced using the function $\zeta$ implies that
there is no measurable subset $C'\subseteq C_{\infty}$ with positive measure such that
the maximum degree of the subgraph of $F_T$ induced by $C'$ and $D_{\infty}$ is finite.
In addition, the measure of the set $D_{\infty}$ is zero and
for every measurable subset $D'\subseteq D_{\infty}$,
the subset of $C_{\infty}$ formed by the neighbors of $D'$ is measurable.

We conclude that the edge set of $I(M_T)$ can be split
into countably many sets $F_i$, $i\in\NN$, of edges between measurable sets $U_i$ and $V_i$,
which are the sets $C_{i,j}$ and $D_i$ from the previous paragraph, and
finitely many sets $F'_i$, $i\in [k]$, between measurable sets $C'_i$ and $D'_i$,
which are the sets $C_{\infty}$ and $D_{\infty}$.
Moreover, the sets $F_i$ together with $U_i$ and $V_i$ satisfy that
\begin{itemize}
\item for every $i\in\NN$, there exists an invertible measure preserving transformation $f_i$ from $U_i$ to $V_i$ such that
      the edges of $F_i$ are precisely the edges $uf_i(u)$, $u\in U_i$, and
\item the sum of the measures of $U_i$, $i\in\NN$, is finite.
\end{itemize}
The latter is true since the sets $U_i$ are the sets obtained by splitting finitely many sets $A$, each of measure at most one, into sets $C_{i,j}$.
Note that $f_i$ are invertible measure preserving transformations both in $M_T$ and in $I(M_T)$
since the measure on $I(M_T)$ is a multiple of the measure on $M_T$.
Moreover, every set $F'_i$ together with $U'_i$ and $V'_i$, $i\in [k]$, satisfy that
\begin{itemize}
\item the edges of $F'_i$ form a forest with leaves in $U'_i$ and the central vertices in $V'_i$ for $i\in [k]$,
\item each set $V'_i$, $i\in [k]$, has measure zero,
\item each measurable subset $X$ of $U'_i$ such that
      the maximum degree of the subgraph of $I(M_T)$ induced by the edges from $F'_i$ between $X$ and $V'_i$ is finite
      has zero measure, and
\item if $Y$ is measurable subset of $V'_i$,
      then the subset of $U'_i$ formed by the end-vertices of the edges in $F'_i$ with one end-vertex in $Y$ is also measurable.
\end{itemize}
We are now ready to verify the strong finitary mass transport principle.
Let $A$ and $B$ be two measurable subsets of vertices of $I(M_T)$ such that 
each vertex of $A$ has at least $a$ neighbors in $B$ and each vertex of $B$ has at most $b$ neighbors in $A$.
Our aim is to show that $a\mu(A)\le b\mu(B)$ where $\mu$ is the probability measure associated with $I(M_T)$.
Since removing the vertices of $V'_i$ from $A$ does not decrease the measure of $A$ since the measure of $V'_i$ is zero,
we can assume that $A$ contains no vertex from a set $V'_i$ for every $i\in [k]$.
Fix $i\in [k]$. Let $A_i$ be the set of the end-vertices of the edges in $F'_i$ with one end-vertex in $B\cap V'_i$.
The set $A_i$ is measurable since it is the intersection of the set $A$ and
the subset of $U'_i$ formed by the end-vertices of the edges in $F'_i$ with one end-vertex in $B\cap V'_i$.
Since every vertex of $V'_i$ has at most $b$ neighbors in $A_i$, the measure of $A_i$ is zero.
Hence, we can remove all the vertices of $A_i$ from $A$ without decreasing the measure of $A$.

We have shown that it is possible to assume without loss of generality that
the set $A$ is disjoint from all sets $U'_i$ and $V'_i$, $i\in [k]$.
Hence, all edges between $A$ and $B$ belong to sets $F_i$, $i\in\NN$.
Each vertex of $A$ is contained in at least $a$ sets $f_i(U_i\cap B)$ and $f^{-1}_i(V_i\cap B)$, $i\in\NN$.
Since the maps $f_i$ are invertible measure preserving transformations from $U_i$ to $V_i$ and
the sum of the measures of the sets $U_i$ is finite (and so is the sum of the measures of $V_i$),
we obtain that
\begin{equation}
\sum_{i\in\NN}\mu(A\cap f_i(U_i\cap B))+\mu(A\cap f_i^{-1}(V_i\cap B))\ge a\mu(A)\;\mbox{.}\label{eq-muA}
\end{equation}
A symmetric argument using that each vertex of $B$ is contained in at most $b$ sets $f_i(U_i\cap A)$ and $f^{-1}_i(V_i\cap A)$, $i\in\NN$, yields that
\begin{equation}
\sum_{i\in\NN}\mu(B\cap f_i(U_i\cap A))+\mu(B\cap f_i^{-1}(V_i\cap A))\le b\mu(B)\;\mbox{.}\label{eq-muB}
\end{equation}
Observe that
$$f_i^{-1}(A\cap f_i(U_i\cap B))=f_i^{-1}(A\cap V_i\cap f_i(U_i\cap B))=(U_i\cap B)\cap f_i^{-1}(V_i\cap A)=B\cap f_i^{-1}(V_i\cap A)\;\mbox{.}$$
Since each $f_i$ is measure preserving, the first sum in (\ref{eq-muA}) and the second sum in (\ref{eq-muB}) are equal.
Likewise, the second sum in (\ref{eq-muA}) and the first sum in (\ref{eq-muB}) are equal.
Since the left hand sides of (\ref{eq-muA}) and (\ref{eq-muB}) are equal, it follows that $a\mu(A)\le b\mu(B)$ as desired.
This completes the proof that $I(M_T)$ satisfies the strong finitary mass transport principle.
\end{proof}

\section*{Acknowledgement}

The authors would like to thank Andrzej Grzesik, Anita Liebenau and Fiona Skerman
for discussions on the existence of modelings for first order convergent sequences of sparse graphs, and
Bernard Lidick\'y for his comments on the early write-up of this manuscript.
They would also like to thank two anonymous referees for their very insightful and detailed comments,
including pointing out a need to give additional details on a variant of Hanf's theorem
used in the proof of Lemma~\ref{lm-modeling}; this variant is now stated as Lemma~\ref{lm-hanf}.

\end{document}